\newcommand{\dd}{\mathrm{d}}
\newcommand{\id}{\operatorname{Id}}
\newcommand{\KK}{\mathds K}
\newcommand{\R}{\mathds R}
\newcommand{\C}{\mathds C}
\newcommand{\Hr}{\mathds H}
\newcommand{\Ca}{\mathds{C}\mathrm{a}}
\newcommand{\SO}{\mathsf{SO}}
\renewcommand{\O}{\mathsf O}
\newcommand{\U}{\mathsf{U}}
\newcommand{\Sp}{\mathsf{Sp}}
\newcommand{\Spin}{\mathsf{Spin}}
\newcommand{\G}{\mathsf{G}}
\newcommand{\K}{\mathsf{K}}
\renewcommand{\H}{\mathsf{H}}
\newcommand{\N}{\mathsf{N}}
\newcommand{\A}{\mathsf{A}}
\newcommand{\Sym}{\operatorname{S}(\Lambda^2 V)}
\renewcommand{\b}{\mathfrak b}
\newcommand{\g}{\mathrm g}
\newcommand{\Gr}{\operatorname{Gr}_2}
\newcommand{\mm}{\mathfrak m}
\newcommand{\h}{\mathfrak h}
\newcommand{\p}{\mathfrak p}
\renewcommand{\k}{\mathfrak k}
\renewcommand{\a}{\mathfrak a}
\newcommand{\diag}{\operatorname{diag}}
\renewcommand{\Sym}{\operatorname{Sym}}
\newcommand{\Ad}{\operatorname{Ad}}
\newtheorem{theorem}{Theorem}[]
\newtheorem{lemma}[theorem]{Lemma}
\newtheorem{corollary}[theorem]{Corollary}
\newtheorem{mainthm}{\sc Theorem}
\theoremstyle{definition}
\theoremstyle{remark}
\newtheorem{remark}[theorem]{Remark}
\title{Strongly nonnegative curvature}
\author[R. G. Bettiol]{Renato G. Bettiol}
\author[R. A. E. Mendes]{Ricardo A. E. Mendes}
\address{\begin{tabular}{lll}
University of Pennsylvania & & Universit\"at M\"unster \\
Department of Mathematics & & Mathematisches Institut \\
209 South 33rd St  & & Einsteinstr.\ 62 \\
Philadelphia, PA, 19104-6395, USA & & D-48149 M\"unster, Germany\\
\emph{E-mail address}: {\tt rbettiol@math.upenn.edu} & & \emph{E-mail address}: {\tt mendes@uni-muenster.de}\\[0.3cm]
{\it Current address for R.\ G.\ Bettiol:} && \\
Max Planck Institute for Mathematics && \\
Vivatsgasse 7\\ 53111 Bonn, Germany &&
\end{tabular}
}
\dedicatory{Dedicated to Karsten Grove on his 70th birthday}
\numberwithin{equation}{section}
\numberwithin{theorem}{section}
\thanks{The first named author was partially supported by the NSF grant DMS-1209387, USA. The second named author is supported by SFB878 Groups, Geometry \& Actions.}
\subjclass[2010]{53B20, 53C20, 53C21, 53C30, 53C35} % primary: 53C20
\date{\today}
\begin{document}
\begin{abstract}
We prove that all currently known examples of manifolds with nonnegative sectional curvature satisfy a stronger condition: their curvature operator can be modified with a $4$-form to become positive-semidefinite.
\end{abstract}

\maketitle

\section{Introduction}

The geometry and topology of manifolds with nonnegative and positive sectional curvature ($\sec\geq0$ and $\sec>0$) have been of great interest since the early days of global Riemannian geometry, and remain exciting research areas with many challenging problems. Despite the natural ties between the classes of manifolds with $\sec\geq0$ and $\sec>0$, there is a sharp contrast in the number of constructions and examples, see Wilking~\cite{wilking-survey} and Ziller~\cite{bible} for surveys. On the one hand, the only currently known examples of closed manifolds with $\sec>0$  different from compact rank one symmetric spaces (CROSS) occur in dimensions $6$, $7$, $12$, $13$ and $24$. On the other hand, a wealth of examples of closed manifolds with $\sec\geq0$ have been produced (beyond homogeneous spaces and biquotients), notably by methods developed by Cheeger~\cite{cheeger} and Grove and Ziller~\cite{grove-ziller-annals,grove-ziller-tams}.
It follows from our previous work~\cite{strongpos,moduli-flags} that almost all known examples of closed manifolds with $\sec>0$ actually satisfy a stronger curvature condition, called \emph{strongly positive curvature}. The purpose of this paper is to show that \emph{all known examples} of manifolds with $\sec\geq0$ have \emph{strongly nonnegative curvature}. This further corroborates the importance of strongly nonnegative and positive curvature in the study of $\sec\geq0$ and $\sec>0$.

A Riemannian manifold $(M,\g)$ is said to have strongly nonnegative curvature if, for all $p\in M$, there exists a $4$-form $\omega\in\wedge^4 T_pM$ such that the modified curvature operator $(R+\omega)\colon \wedge^2 T_pM\to\wedge^2 T_pM$ is positive-semidefinite. This is an intermediate condition between $\sec\geq0$ and positive-semidefiniteness of the curvature operator. It is worth recalling that manifolds satisfying the latter have been classified~\cite{bw,wilking-survey}, see Section~\ref{sec:basics} for details. Some of the key properties of strongly nonnegative curvature is that it is preserved by products, Riemannian submersions and Cheeger deformations \cite[Thm.\ A, Thm.\ B]{strongpos}, see also \cite[\S 6.4]{strongpos}. In particular, since any compact Lie group $\G$ with bi-invariant metric has positive-semidefinite curvature operator, all compact homogeneous spaces $\G/\H$ and all compact biquotients $\G/\!\!/\H$ have metrics with strongly nonnegative curvature.

Using a gluing method inspired by the construction of Berger spheres, Cheeger~\cite{cheeger} produced 
another class of closed manifolds with $\sec\geq0$. Our first main result is that all manifolds in this class also have strongly nonnegative curvature:

\begin{mainthm}\label{thm:A}
The connected sum of any two compact rank one symmetric spaces (with any orientation) admits a metric with strongly nonnegative curvature.
\end{mainthm}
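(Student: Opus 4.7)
My plan is to revisit Cheeger's original construction \cite{cheeger} of a nonnegatively curved metric on $M_1 \# M_2$ and to verify, step by step, that strongly nonnegative curvature is preserved. The engine of the verification is provided by the two preservation results from \cite{strongpos}: strongly nonnegative curvature is closed under Riemannian products and under Cheeger deformations.

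Each CROSS $(M_i, \g_i)$ carries its standard symmetric metric, whose curvature operator is positive-semidefinite; in particular it has strongly nonnegative curvature with compensating $4$-form $\omega = 0$. I would choose a cohomogeneity-one subgroup $\G_i \subset \Iso(M_i, \g_i)$ with a fixed point $p_i$ — for instance the isotropy subgroup at $p_i$, which acts transitively on each small geodesic distance sphere $\partial B_\epsilon(p_i)$. Next, perform a Cheeger deformation of $(M_i, \g_i)$ along $\G_i$ (with a bi-invariant metric on $\G_i$) with parameter $t > 0$, possibly combined with a radial warping concentrated in a small annular neighborhood of $\partial B_\epsilon(p_i)$, to obtain a metric $\g_i^t$ that is isometric to a Riemannian product $\partial B_\epsilon(p_i) \times [0, \delta]$ on a collar of $\partial B_\epsilon(p_i)$ inside $N_i := M_i \setminus \operatorname{int} B_\epsilon(p_i)$. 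By \cite[Thm.~B]{strongpos}, the Cheeger deformation step preserves strongly nonnegative curvature.

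After a common rescaling so that the round spheres $(\partial N_i, \g_i^t|_{\partial N_i})$ share a common radius, I would glue $N_1$ and $N_2$ along an orientation-reversing isometry of their collars — either orientation is realizable, since $\O(n)$ contains orientation-reversing isometries of a round sphere. The resulting smooth Riemannian manifold $(M_1 \# M_2, \g)$ has strongly nonnegative curvature at every point: outside the collar, $\g$ restricts to one of the $\g_i^t$, which has strongly nonnegative curvature by the previous step; on the collar, $\g$ is a Riemannian product of a round sphere and an interval, both of which have positive-semidefinite curvature operator, so strongly nonnegative curvature follows from \cite[Thm.~A]{strongpos}. The definition being pointwise — no regularity of $\omega$ is required across the gluing locus — is what makes this conclusion legitimate.

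The principal technical obstacle is arranging for the Cheeger-deformed metric to become genuinely cylindrical on a collar while preserving strongly nonnegative curvature. For $S^n$ and $\C P^n$, explicit formulas for $\g_i^t$ in polar coordinates around $p_i$ show that the required radial warping can be confined to a region where the Cheeger-deformed metric is already very close to a product, so that a direct local computation verifies that the interpolated metric remains strongly nonnegative. The remaining CROSSes $\Hr P^n$ and $\Ca P^2$ can be handled analogously, using that the tangent sphere at a fixed point of the chosen $\G_i$ is a round sphere on which $\G_i$ acts isometrically and transitively.
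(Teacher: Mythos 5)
Your top-level strategy---make the complement of a ball in each CROSS cylindrical with round cross-section near the boundary, then glue---is the same as the paper's (and Cheeger's), but the two places where all the work lies are exactly the ones your proposal leaves open, and one of them is asserted incorrectly. The boundary spheres you propose to glue are not round. For $\KK P^n$ with $\KK\neq\R$, the distance spheres $\partial B_\epsilon(p)$ are Berger spheres, and a Cheeger deformation along the isotropy group at $p$ keeps them Berger (it shrinks orbit directions); no common rescaling will then match, say, the Berger $S^{15}\subset\Hr P^4$ with the Berger $S^{15}\subset\Ca P^2$, since for $d>4$ a metric on $S^{d-1}$ invariant under two distinct transitive actions among $\U(n)$, $\Sp(n)$, $\Spin(9)$ is already round (the paper flags this in a footnote). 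Your remark that ``the tangent sphere at a fixed point is round'' concerns the unit sphere in $T_pM$, not the positive-radius distance spheres that actually get glued. The paper arranges exact roundness by a scale-up/scale-down: it starts from the left-$\G$-, right-$\K$-invariant metric $L$ inducing the \emph{round} metric on $\G/\H$, stretches it in the $\p$-direction by the factor $\tfrac{a^2b}{a^2b-1}$ as in \eqref{eq:E}, and the quotient $\G\times_\K V$ with $f\equiv a$ near the boundary scales it back down to exactly round by Lemma~\ref{lem:prescribing}(ii).

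Second, the step you defer to ``a direct local computation'' is the theorem. The radial warping is not itself a Cheeger deformation, so the preservation results you cite do not cover it. For $\C P^n$ and $\Hr P^n$ one can indeed reduce to products and submersions, because $\K=\H\times\mathsf{L}$ and the slice representation factors through $\mathsf{L}$, so the complement of a ball is the associated bundle $(\G/\H)\times_{\mathsf{L}}V$---this is Cheeger's original trick. But for $\Ca P^2$ the group $\K=\Spin(8)$ admits no such splitting, so your sentence ``the remaining CROSSes can be handled analogously'' hides precisely the case where the analogy breaks. The paper needs a genuinely new estimate there (Lemma~\ref{lem:cheeger}): applying the Gray--O'Neill formula to $\G\times V\to\G\times_\K V$, splitting the $A$-tensor according to \eqref{eq:vertical-horizontal}, and using concavity of $f$, it bounds the modified curvature operator of the disk bundle below by that of $\G/\H$ with the scaled-up metric, which has strongly positive curvature for $a$ large by openness. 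Without an argument of this type your proof does not go through for any connected sum involving $\Ca P^2$, and the roundness issue above affects every mixed connected sum.
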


We remark that some manifolds in Theorem \ref{thm:A} are diffeomorphic to biquotients, while others are not even homotopy equivalent to biquotients \cite{totaro-biquotients}, see Remark~\ref{rem:totaro}.

A significant generalization of the gluing construction in \cite{cheeger} was achieved by Grove and Ziller~\cite{grove-ziller-annals}, in the context of \emph{cohomogeneity one manifolds}. These are manifolds with an isometric group action whose orbit space is $1$-dimensional, see Section~\ref{sec:cohom1} for details. Our second main result is that their method to produce metrics with $\sec\geq0$ actually yields strongly nonnegative curvature:

\begin{mainthm}\label{thm:B}
Every cohomogeneity one manifold whose nonprincipal orbits have codimension $\leq2$ admits an invariant metric with strongly nonnegative curvature.
\end{mainthm}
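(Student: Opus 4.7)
The plan is to exhibit the Grove--Ziller invariant metric of nonnegative sectional curvature on $M$ as arising, on each tube around a singular orbit, as the base of a Riemannian submersion from a product with curvature operator $\geq 0$, and then to combine the resulting $4$-forms across the regular orbit via convexity.

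Recall the double disc bundle decomposition $M = D_- \cup_E D_+$ from \cite{grove-ziller-annals}, where $D_\pm = \G \times_{\K_\pm} \mathbb{D}^{l_\pm+1}$ is a tubular neighborhood of the singular orbit $\G/\K_\pm$ and $E \cong \G/\H$ is the regular orbit glued along $\partial D_\pm$. By hypothesis $\dim \mathbb{D}^{l_\pm+1} \leq 2$, and Grove--Ziller equip each disc with a $\K_\pm$-invariant rotationally symmetric metric of nonnegative sectional curvature that is a Riemannian product near the boundary. In dimension $\leq 2$, nonnegative sectional curvature is equivalent to $R \geq 0$; combined with a bi-invariant metric on $\G$ (which satisfies $R \geq 0$), the product $(\G, g_{\rm bi}) \times \mathbb{D}^{l_\pm+1}$ has $R \geq 0$. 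The tube $D_\pm$ is the base of the Riemannian submersion from this product induced by the free diagonal $\K_\pm$-action.

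By Theorem~B of \cite{strongpos}, Riemannian submersions preserve strongly nonnegative curvature; since the total space has $R \geq 0$ (in particular it has strongly nonnegative curvature with trivial $4$-form), each $(D_\pm, \g)$ has strongly nonnegative curvature, witnessed by some $4$-form $\omega_\pm$. Moreover, the Grove--Ziller construction arranges a collar $E \times (-2\epsilon, 2\epsilon) \subset M$ on which $\g$ is a Riemannian product and on which the submersion restricts to the product of $\G \times S^{l_\pm}(r_0) \to E$ with the identity on the interval factor. Consequently, the $4$-form construction of \cite[Thm.~B]{strongpos} yields $\omega_\pm$ that can be taken translation-invariant on the collar, i.e.\ pullbacks from $E$ of a $4$-form witnessing the strong nonnegativity of $(E, \g|_E)$ via its own submersion from $\G \times S^{l_\pm}(r_0)$.

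To glue, pick a smooth cutoff $\chi\colon (-2\epsilon, 2\epsilon) \to [0,1]$ with $\chi \equiv 1$ near $-2\epsilon$ and $\chi \equiv 0$ near $+2\epsilon$, and define a global $4$-form $\omega$ on $M$ by
\[
\omega \;=\; \chi(t)\,\omega_- \;+\; \bigl(1-\chi(t)\bigr)\,\omega_+
\]
on the collar (using the translation-invariant extensions of $\omega_\pm$ across $E$), and $\omega = \omega_\pm$ on $D_\pm$ outside the collar. Since the subset $\{\omega \in \wedge^4 T_pM : R_p + \omega \geq 0\}$ is convex at each point $p \in M$, this convex combination satisfies $R + \omega \geq 0$ throughout $M$, establishing strongly nonnegative curvature for the $\G$-invariant Grove--Ziller metric. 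The main obstacle is the translation-invariance claim for $\omega_\pm$ on the collar: this requires that the $4$-form construction in \cite[Thm.~B]{strongpos} respect product decompositions of Riemannian submersions, which in turn relies on the product-structure-near-the-boundary condition built into the Grove--Ziller disc metric.
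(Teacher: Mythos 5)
Your proposal has a genuine gap: the total space of the submersion you describe does not produce a metric on $M$. If you equip $\G$ with the bi-invariant metric $Q$ and take the submersion $(\G,Q)\times V_\pm \to \G\times_{\K_\pm}V_\pm$, then by Lemma~\ref{lem:prescribing}(i) the induced metric on the common boundary $\G/\H$ is $Q$ \emph{shrunk} in the direction $\p_\pm$ by the factor $\tfrac{a^2b}{1+a^2b}<1$. Since $\p_-\neq\p_+$ in general, the two halves induce \emph{different} metrics on $\G/\H$, so they cannot be glued to a smooth metric on $M$; your cutoff interpolation of $4$-forms cannot repair a discontinuity of the metric itself. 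The actual Grove--Ziller construction resolves this by first \emph{scaling up} $Q$ in the direction of $\p_\pm$ by the compensating factor $\tfrac{a^2b}{a^2b-1}$, so that after the submersion the boundary metric is exactly the normal homogeneous metric of $Q$ on $\G/\H$, identical from both sides (Lemma~\ref{lem:prescribing}(ii)). But the scaled-up left-invariant metric $L'$ is no longer bi-invariant and does not have positive-semidefinite curvature operator, so your premise that the total space has $R\geq0$ fails precisely at the point where the construction becomes nontrivial. The essential content of the paper's proof is Lemma~\ref{lem:fourthirds}: dilating $Q$ by a factor $t\leq\tfrac43$ along an abelian subalgebra (here $\p_\pm$ is one-dimensional, hence abelian, because the codimension is $2$) preserves strongly nonnegative curvature; this is proved by realizing the dilation as a semi-Riemannian (``backwards Cheeger'') submersion and checking that the Gray--O'Neill correction terms combine with the coefficient $\tfrac{4-3t}{4}\geq0$. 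Choosing $a\geq 2/\sqrt{b}$ makes the required dilation factor $\leq\tfrac43$. None of this appears in your argument, and without it the claim is not established.

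Two smaller remarks. First, once the scale-up is done correctly, no cutoff gluing of $4$-forms is needed: each half is a Riemannian product $\G/\H\times[0,\varepsilon)$ near the boundary with the \emph{same} normal homogeneous metric, so the halves match isometrically (your convexity observation for the set $\{\omega: R_p+\omega\geq0\}$ is correct but superfluous). Second, your claim that the $4$-forms produced by the submersion argument are ``translation-invariant on the collar'' is unsubstantiated as stated; in the paper this is automatic because the metric on the collar is a genuine Riemannian product of an interval with a fixed homogeneous metric.
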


The class of manifolds in Theorem \ref{thm:B} is surprisingly rich. For instance, it includes all $4$ oriented diffeomorphism types homotopy equivalent to $\R P^5$, see \cite[Thm.~G]{grove-ziller-annals}. Even more interestingly, it includes a number of total spaces of principal $\G$-bundles, which can be used to construct metrics with strongly nonnegative curvature on associated vector bundles and sphere bundles (see Corollary~\ref{cor:C}). Remarkably, in combination with other techniques, this implies that \emph{all exotic $7$-spheres} admit metrics with strongly nonnegative curvature (see Section~\ref{sec:cohom1} for details).

Constructions of metrics with nonnegative sectional curvature on vector bundles can be interpreted as instances the ``converse'' to the Soul Theorem of Cheeger and Gromoll~\cite{cheeger-gromoll}. 
This celebrated result states that any complete open manifold $M$ with $\sec\geq0$ has a totally convex compact submanifold $S\subset M$ without boundary, called the \emph{soul} of $M$, such that $M$ is diffeomorphic to the normal bundle of $S$ in $M$. Observe that if $M$ has strongly nonnegative curvature, then so does its soul $S$, as it is a totally geodesic submanifold~\cite[Prop.\ 2.6]{strongpos}. The ``converse" question of which vector bundles over closed manifolds with $\sec\geq0$ admit a complete metric with $\sec\geq0$ has been studied by several authors.
It follows from our results that all the progress made to date regarding this problem can be transplanted to the context of strongly nonnegative curvature (see Corollary~\ref{cor:C}).

In the context of complete open manifolds with $\sec\geq0$, Guijarro~\cite{guijarro} 
proved the existence of an ``improved" metric which is isometric to a product outside a neighborhood of the soul. Our third main result is that the same improvement can be obtained with strongly nonnegative curvature:

\begin{mainthm}\label{thm:D}
Let $(M,\g)$ be a complete open manifold with strongly nonnegative curvature and soul $S$. There exists another metric $\g'$ on $M$ with strongly nonnegative curvature, such that $S$ remains a soul, and $(M,\g')$ is isometric to a product $\nu_1(S)\times [1,+\infty)$ outside a compact neighborhood of $S$.
\end{mainthm}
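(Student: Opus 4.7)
The plan is to adapt Guijarro's construction~\cite{guijarro} by realizing the new metric $\g'$ as the Riemannian submersion image of a product, so that the preservation results of \cite[Thms.\ A, B]{strongpos} apply directly.

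\emph{Setup.} By the Soul Theorem and Perelman's resolution of the Soul Conjecture, we may identify $M$ with the total space of the normal bundle $\pi\colon\nu(S)\to S$, and write $\nu(S)=P\times_{\O(k)}\R^k$, where $P\to S$ is the orthonormal frame bundle and $k=\operatorname{codim}(S)$. By a structure theorem of Strake--Walschap type (enhanced to the strongly nonnegative setting), we may further assume that $\g$ is a connection metric: the Riemannian submersion image under the diagonal $\O(k)$-action of a product $(P,\g_P)\times(\R^k,\g_{\R^k})$, where $\g_{\R^k}$ is an $\O(k)$-invariant metric on $\R^k$ and $\g_P$ is a strongly nonnegatively curved connection metric on the principal bundle $P$.

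\emph{Construction of $\g'$.} Equip $\R^k$ with the $\O(k)$-invariant warped product $\hat\g=\dd r^2+h(r)^2\,\g_{S^{k-1}}$, where $h\colon[0,\infty)\to[0,1]$ is smooth with $h(r)=r$ near $r=0$ (for smoothness at the origin), $h''\leq 0$, $|h'|\leq 1$, and $h(r)\equiv 1$ for $r\geq 1$; such $h$ clearly exist. Then $(\R^k,\hat\g)$ is flat near the origin and isometric to $S^{k-1}(1)\times[1,+\infty)$ outside the unit ball. A direct computation shows the curvature operator of $\hat\g$ is diagonal in the polar frame with eigenvalues $-h''/h$ and $(1-(h')^2)/h^2$, both nonnegative; hence $(\R^k,\hat\g)$ has positive-semidefinite curvature operator, and in particular strongly nonnegative curvature with trivial $4$-form. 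Define $\g'$ as the Riemannian submersion image of $(P,\g_P)\times(\R^k,\hat\g)$ under the same $\O(k)$-action. Outside the unit disk bundle, $(M,\g')$ is then isometric to $\nu_1(S)\times[1,+\infty)$ as required; and since $\hat\g$ agrees with $\g_{\R^k}$ near $r=0$ (both being flat there), $\g'=\g$ on a neighborhood of $S$, so $S$ remains the soul.

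\emph{Main obstacle.} Strong nonnegativity of $\g'$ now follows from the product and submersion preservation theorems \cite[Thms.\ A, B]{strongpos}, since both factors of $(P,\g_P)\times(\R^k,\hat\g)$ are strongly nonnegatively curved. The main obstacle lies in the setup step: arranging the reduction to a connection metric $\g$ with $(P,\g_P)$ strongly nonnegatively curved. I expect this to follow from an equivariant enhancement of the classical Strake--Walschap argument, using the Cheeger deformation and submersion techniques of \cite[\S 3, \S 6]{strongpos} to transfer strong nonnegativity from $(M,\g)$ to the auxiliary principal bundle $(P,\g_P)$, exploiting in an essential way that $\g_{\R^k}$ is an $\O(k)$-invariant metric on $\R^k$ with $\sec\geq 0$ and hence (by the same diagonal computation as for $\hat\g$) has positive-semidefinite curvature operator. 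The remaining verifications---smoothness of $\g'$, and that $S$ is indeed a soul of $(M,\g')$---are routine.
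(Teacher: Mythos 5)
Your construction hinges on the ``setup'' step, and that step is a genuine gap, not a routine verification. There is no Strake--Walschap-type structure theorem asserting that an arbitrary complete metric with $\sec\geq0$ (let alone strongly nonnegative curvature) on the total space of $\nu(S)$ is a connection metric, i.e.\ the submersion image of a product $(P,\g_P)\times(\R^k,\g_{\R^k})$ under the diagonal $\O(k)$-action. Strake--Walschap and Guijarro--Walschap go in the opposite direction: they give conditions under which a \emph{connection metric} has $\sec\geq0$. What is actually known about the given metric $\g$ is only that the metric projection onto the soul is a smooth Riemannian submersion near $S$ (Guijarro--Walschap, \cite[Prop.\ 2.4]{guijarro-walschap}); its fibers need not be mutually isometric, need not carry a linear $\O(k)$-structure, and the holonomy of the submersion need not reduce to $\O(k)$ acting on a fixed rotationally invariant metric. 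Without this reduction the rest of your argument (replacing the fiber metric by a capped cylinder and citing the product/submersion preservation theorems) has nothing to stand on. A secondary error: you assert that $\hat\g$ agrees with $\g_{\R^k}$ near $r=0$ ``both being flat there,'' but an $\O(k)$-invariant metric with $\sec\geq0$ on $\R^k$ need not be flat near the origin, so the claim that $\g'=\g$ near $S$ fails as stated.

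The paper's proof avoids any such structural assumption on $\g$. Following Guijarro, one takes a convex tubular neighborhood $D_{r_*}$ of $S$ (Perelman, Guijarro--Walschap), builds a smooth convex hypersurface $N\subset D_{r_*}\times\R$ consisting of the graph of a function vanishing near $S$ capped off by the cylinder $\partial D_{r_1}\times[1,+\infty)$, and observes that $N$ is diffeomorphic to $\nu(S)\cong M$. Since $(M\times\R,\g+\dd t^2)$ has strongly nonnegative curvature and $N$ is convex, the Gauss equation expresses $R^N$ as the restriction of $R^{M\times\R}$ plus the Kulkarni--Nomizu square of the (positive-semidefinite) shape operator, which is itself a positive-semidefinite operator satisfying the Bianchi identity; hence $N$ inherits strongly nonnegative curvature. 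If you want to salvage your approach, you would need to either prove the connection-metric reduction (which would be a substantial new theorem) or switch to the convex-hypersurface argument, for which the only new input beyond Guijarro is the observation that convexity plus the Gauss equation preserves strongly nonnegative curvature.
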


As a consequence of Theorem~\ref{thm:D}, it is possible to construct metrics with strongly nonnegative curvature on \emph{doubles} of any open manifolds with strongly nonnegative curvature. Recall that the double of an open manifold (or manifold with boundary) $M$ is the closed manifold obtained by gluing two copies of $M$ together along their boundary. For instance, one can build closed examples of manifolds with strongly nonnegative curvature taking the double of any of the vector bundles in Corollary~\ref{cor:C}.

The constructions in Theorems~\ref{thm:A}, \ref{thm:B}, and \ref{thm:D} comprise an exhaustive list of all the currently known methods to produce manifolds with $\sec\geq0$. Therefore, as claimed in the first paragraph, \emph{all known examples} of manifolds with $\sec\geq0$ have strongly nonnegative curvature.

Besides the fundamental fact that strongly nonnegative curvature is preserved under Riemannian submersions~\cite{strongpos}, there are two main technical tools needed to prove the above results. The first (Lemma~\ref{lem:fourthirds}) is that bi-invariant metrics on Lie groups retain strongly nonnegative curvature after being dilated by a factor of up to $\tfrac43$ in the direction of an abelian subalgebra. This result is a strengthening of a result in Grove and Ziller~\cite[Prop.\ 2.4]{grove-ziller-annals}, see also Ziller~\cite[Lemma 2.9]{bible}, using the same key fact that such dilations are ``backwards" Cheeger deformations, or submersions from a certain semi-Riemannian manifold. The second technical result (Lemma~\ref{lem:cheeger}) asserts that certain disk bundles whose boundary is a homogeneous space with strongly positive curvature have a metric with strongly nonnegative curvature which is a product near the boundary. This is proved through certain estimates that generalize those in Cheeger~\cite{cheeger}.

This paper is organized as follows. Section~\ref{sec:basics} provides a recollection of the definitions and basic properties of strongly nonnegative curvature, as well as a discussion of basic examples. Constructions of metrics with strongly nonnegative curvature on cohomogeneity one manifolds are given in Section~\ref{sec:cohom1}, where Theorem~\ref{thm:B} is proved and its consequences for associated bundles are described. In Section~\ref{sec:cheegermflds}, we explain a method to endow certain disk bundles with strongly nonnegative curvature, leading to the proof of Theorem~\ref{thm:A}. Finally, Section~\ref{sec:guijarro} contains the proof of Theorem~\ref{thm:D}.

\medskip
\noindent
{\bf Acknowledgements.}
It is a pleasure to thank Burkhard Wilking and Wolfgang Ziller for suggestions regarding Lemmas \ref{lem:fourthirds} and  \ref{lem:cheeger} respectively.

\section{Definitions and Basic Properties}\label{sec:basics}

A detailed account on strongly positive and nonnegative curvature can be found in \cite{mybook,thesis,strongpos,moduli-flags}. As a service to the reader, a short summary is provided below.

\subsection{Modified curvature operators}

Let $(M,\g)$ be a Riemannian manifold. Using the inner products induced by $\g$, identify all exterior powers $\wedge^k T_pM$ with their duals $\wedge^k T_pM^*$.
Denote by $\Sym^2(\wedge^2 T_pM)$ the space of symmetric linear operators $S\colon \wedge^2 T_pM\to\wedge^2 T_pM$, and by $\mathfrak b\colon\Sym^2(\wedge^2 T_pM)\to\wedge^4 T_pM$ the \emph{Bianchi map}
\begin{equation*}
\mathfrak b(S)(X,Y,Z,W)=\tfrac13\Big(\langle S(X\wedge Y),Z\wedge W\rangle + \langle S(Y\wedge Z),X\wedge W\rangle + \langle S(Z\wedge X),Y\wedge W\rangle\Big).
\end{equation*}
Furthermore, identify $\wedge^4 T_pM$ as a subspace of $\Sym^2(\wedge^2 T_pM)$, by means of
\begin{equation}\label{eq:wedge4}
\langle \omega(X\wedge Y),Z\wedge W\rangle=\omega(X,Y,Z,W),
\end{equation}
so that $\Sym^2(T_pM)=\ker\mathfrak b\oplus\wedge^4 T_pM$ is an orthogonal direct sum decomposition, and $\mathfrak b$ is the orthogonal projection operator onto $\wedge^4 T_pM$.

With the above setup, we may add to the curvature operator $R\in\ker\mathfrak b$ of $(M,\g)$ any $4$-form $\omega\in\wedge^4 T_pM$, and the resulting \emph{modified curvature operator} $(R+\omega)\in\Sym^2(\wedge^2 T_pM)$ has the same sectional curvature function as $R$.
Indeed, by \eqref{eq:wedge4}, the quadratic form associated to $\omega\in\wedge^4 T_pM$ vanishes on the Grassmannian of (oriented) $2$-planes $\Gr(T_pM)=\{X\wedge Y\in\wedge^2 T_pM:\|X\wedge Y\|^2 =1\}$, and hence
\begin{equation}\label{eq:samesec}
\sec(X\wedge Y)=\langle R(X\wedge Y),X\wedge Y\rangle=\langle (R+\omega)(X\wedge Y),X\wedge Y\rangle.
\end{equation}

\subsection{Strongly nonnegative curvature}
The manifold $(M,\g)$ is said to have \emph{strongly nonnegative curvature} if, for all $p\in M$, there exists $\omega\in\wedge^4 T_pM$ such that the modified curvature operator $R+\omega$ is positive-semidefinite.

Strongly nonnegative curvature is clearly an intermediate curvature condition between $\sec\geq0$ and positive-semidefiniteness of the curvature operator. All these curvature conditions are equivalent in dimensions $\leq3$, and strongly nonnegative curvature remains equivalent to $\sec\geq0$ in dimension $4$, see \cite{Thorpe72} and \cite[Prop.\ 6.83]{mybook}.

\subsection{Basic properties}
Elementary arguments show that products and totally geodesic submanifolds of manifolds with strongly nonnegative curvature also have strongly nonnegative curvature \cite[\S 2]{strongpos}. In addition, strongly nonnegative curvature is preserved under Riemannian submersions. This fundamental result was established in \cite{strongpos}, by rewriting the Gray-O'Neill formula \cite[Thm.\ 9.28f]{besse} that relates curvature operators of a Riemannian submersion $\pi\colon\overline M\to M$ and its $A$-tensor as
\begin{equation}\label{eq:oneill}
\begin{aligned}
\langle R(X\wedge Y),Z\wedge W\rangle &=\langle \overline R(\overline X\wedge\overline Y),\overline Z\wedge\overline W\rangle + 3\langle\alpha(\overline X\wedge\overline Y),\overline Z\wedge\overline W\rangle \\
&\quad -3\mathfrak b(\alpha)(\overline X,\overline Y,\overline Z,\overline W),
\end{aligned}
\end{equation}
where $\alpha\in\Sym^2(\wedge^2 T_pM)$ is the positive-semidefinite operator $\alpha=A^*A$, i.e.,
\begin{equation*}
\langle\alpha(X\wedge Y),Z\wedge W\rangle=\langle A_X Y,A_Z W\rangle.
\end{equation*}
According to \eqref{eq:oneill}, if there exists $\overline\omega\in\wedge^4 T_p\overline M$ such that $\overline R+\overline\omega$ is positive-semidefinite, then $\omega=\big(\overline\omega +3\mathfrak b(\alpha)\big)|_{\wedge^2 T_pM}$ is such that $R+\omega$ is positive-semidefinite.
Similar arguments also show that strongly nonnegative curvature is preserved under Cheeger deformations \cite[\S 2.5]{strongpos}.

\subsection{Basic examples}
The simplest examples of manifolds with strongly nonnegative curvature are
those whose curvature operator is positive-semidefinite. Closed manifolds with this property have been classified, mainly through the work of B\"ohm and Wilking~\cite{bw}, see Wilking~\cite[Thm.\ 1.13]{wilking-survey}.
Namely, each factor in the de~Rham decomposition of the universal covering of such a manifold is isometric to one of:
\begin{enumerate}[\rm (i)]
\item Euclidean space;
\item Sphere with positive-semidefinite curvature operator;
\item Compact irreducible symmetric space;
\item Compact K\"ahler manifold biholomorphic to $\C P^n$ whose restriction of the curvature operator to real $(1,1)$-forms is positive-semidefinite.
\end{enumerate}
An important subfamily are Lie groups $\G$ with a bi-invariant metric $Q$. Recall that the curvature operator $R_\G \colon \wedge^2\!\mathfrak g\to\wedge^2\mathfrak g$ of $(\G,Q)$ is given by
\begin{equation}\label{eq:RG}
\langle R_\G(X\wedge Y),Z\wedge W\rangle=\tfrac14 Q([X,Y],[Z,W]),
\end{equation}
which is clearly positive-semidefinite.

Since Riemannian submersions preserve strongly nonnegative curvature, all compact homogeneous spaces $\G/\H$ and all compact biquotients $\G/\!\!/\H$ have metrics with strongly nonnegative curvature. 
For instance, one may take on $\G/\H$ the so-called \emph{normal homogeneous metric}, that is, the metric induced by the bi-invariant metric $Q$ on $\G$ via the quotient map, and similarly for $\G/\!\!/\H$.

\begin{remark}
It is an interesting question whether the moduli spaces of homogeneous metrics with strongly nonnegative curvature and $\sec\geq0$ coincide on a given compact homogeneous space.
This has been studied for Wallach flag manifolds in \cite{moduli-flags} and Berger spheres in \cite{thesis,strongpos}. In the former, these moduli spaces coincide, but that is not the case in the latter. In fact, the spheres $S^{4n+3}=\Sp(n+1)/\Sp(n)$ and $S^{15}=\Spin(9)/\Spin(7)$ endowed with the Berger metric $\g_\lambda=\lambda\,\g_\mathcal V\oplus\g_\mathcal H$ have $\sec\geq0$ for all $0<\lambda\leq\tfrac43$, but do not have strongly nonnegative curvature if $\lambda$ is sufficiently close to $\tfrac43$.
\end{remark}

\section{Cohomogeneity one manifolds}
\label{sec:cohom1}

A \emph{cohomogeneity one manifold} is a Riemannian manifold $(M,\g)$ with an isometric action by a compact Lie group $\G$ such that the orbit space $M/\G$ is $1$-dimensional.
It is natural to investigate strongly nonnegative curvature among these manifolds after observing that all compact homogeneous (that is, \emph{cohomogeneity zero}) spaces admit strongly nonnegative curvature, see Section~\ref{sec:basics}. After briefly describing the basic structure of cohomogeneity one manifolds (see, e.g., \cite{mybook,gwz,grove-ziller-annals} for details), we strengthen the gluing construction of Grove and Ziller~\cite{grove-ziller-annals} from $\sec\geq0$ to strongly nonnegative curvature, proving Theorem~\ref{thm:B}.

\subsection{Topological structure}
The orbit space $M/\G$ of a cohomogeneity one manifold $M$ is, up to rescaling, isometric to one of $\R$, $S^1$, $[0,+\infty)$ or $[-1,1]$. In the first two cases, all orbits are principal, and hence the quotient map $q\colon M\to M/\G$ is a fiber bundle. In the last two cases, there are nonprincipal orbits $S$ corresponding to boundary points of $M/\G$, which are called \emph{singular} or \emph{exceptional}, according to their dimension being respectively smaller or equal to that of principal orbits.

If $M/\G=[0,+\infty)$, then $M$ is equivariantly diffeomorphic to the total space of a disk bundle over the unique nonprincipal orbit $S$. More precisely, fix $p\in S$, denote by $\K=\G_p$ its isotropy group, and denote by $V=\nu_p S$ the normal space to $S$. The slice representation $\rho\colon\K\to\O(V)$ is transitive on spheres in $V$. By the Slice Theorem, $M$ is $\G$-equivariantly diffeomorphic to the quotient $\G\times_\K V$ of the product $\G\times V$ by the action of $\K$ on $\G\times V$ given by $k\cdot(g,v)=(gk^{-1},\rho(k)v)$. Fixing a unit vector $v_0\in V$, the principal isotropy group is $\H=\K_{v_0}$, and the unit sphere $S(V)$ in $V$ is $\K$-equivariantly diffeomorphic to $\K/\H$.

If $M/\G=[-1,1]$, then $M$ is $\G$-equivariantly diffeomorphic to the union of two disk bundles as above, one over each of the two nonprincipal orbits $S_\pm=\G/\K_\pm$, glued along their common boundary, which is a principal orbit $\G/\H$.

\subsection{Strongly nonnegative curvature}
The construction of cohomogeneity one metrics with strongly nonnegative curvature is straightforward in case $M/\G$ is one of $\R$, $S^1$, or $[0,+\infty)$. We thus focus on the more involved case $M/\G=[-1,1]$, which requires that nonprincipal orbits $S_\pm$ have codimension $\leq2$. 
We follow the same strategy as in Grove and Ziller~\cite{grove-ziller-annals} to glue two disk bundles. Namely, we construct metrics $\g_\pm$ with strongly nonnegative curvature on each ``half'' $M_\pm=\G\times_{\K_\pm} V_\pm$, which outside of a compact set are isometric to $\G/\H\times[0,\varepsilon)$ with a product metric $\g_0+\dd t^2$, where $\g_0$ is normal homogeneous. This is achieved with a \emph{scale up/scale down} procedure involving the bi-invariant metric on $\G$. Since the construction is the same on each half, we henceforth drop the subscripts $_\pm$.

The desired metric $\g$ on $\G\times_\K V$ is induced by a metric on $\G\times V$ of the form $L+\dd t^2 + f(t)^2\dd\theta^2$, where $L$ is a left-$\G$-invariant and right-$\K$-invariant metric on~$\G$, $f(t)$ is an odd smooth function such that $f'(0)=1$ and $f(t)>0$ for all $t>0$, and $\dd\theta^2$ is the round metric on the unit sphere $S(V)$. Let $\pi\colon\G\times V\to\G \times_{\K} V$ denote the quotient map, and $\h\subset \k\subset\mathfrak g$ the Lie algebras of $\H\subset\K\subset\G$. Write $L$-orthogonal decompositions $\mathfrak g=\k\oplus\mm$ and $\k=\h\oplus\p$. We use subscripts to denote the components in these subspaces, e.g., $X_\k$ and $X_\mm$ are the components of $X\in\mathfrak g$ in $\k$ and $\mm$ respectively.
Routine computations show that the vertical and horizontal spaces of the Riemannian submersion $\pi\colon\G\times V\to \G\times_\K V$ at $(e,tv_0)$ are given by
\begin{equation}\label{eq:vertical-horizontal}
\begin{aligned}
\mathcal{V}&=(\h\times 0)\oplus\big\{\big(\!-\!X,X^*_{tv_0}\big) \ |\ X\in\p\big\},\\
\mathcal{H}&=(\mm\times 0)\oplus\big\{\big(f(t)^2BY,Y^*_{tv_0}\big) : Y\in\p\big\}\oplus\operatorname{span}\big\{\tfrac{\partial}{\partial t}\big\},
\end{aligned}
\end{equation}
where $X^*_{tv_0}=\frac{\dd}{\dd s}\rho(\exp(sX))tv_0\big|_{s=0}$ is the value at $tv_0\in V$ of the action field $X^*$ induced by $X\in\p$, and $B$ is the $L$-symmetric automorphism $B\colon\p\to\p$ such that $L(\cdot,B\cdot)=\dd\theta^2$. 

The following description of the metric on the principal orbits can be obtained from the above splitting, see for instance \cite{cheeger,grove-ziller-annals}.

\begin{lemma}[Scale down]\label{lem:prescribing}
Using the above notation, for each $t>0$, we have:
\begin{enumerate}[\rm (i)]
\item The metric $\langle\cdot,\cdot\rangle$ on the principal orbit $\G\big([e,tv_0]\big)\subset\G\times_\K V$ induced by the metric $L+\dd t^2+f(t)^2\dd\theta^2$ on $\G\times V$ is given by $L(\cdot,C\cdot)$, where $C\colon\mm\oplus\p\to\mm\oplus \p$ is the $L$-symmetric automorphism defined as
\begin{equation*}
C=\diag\big(\id,\,f(t)^2\,B\,(\id + f(t)^2B)^{-1}\big).
\end{equation*}
\item Suppose that $B=b\,\id$ for some $b>0$, and $f(t)^2>\tfrac1b$. Define a metric $L'(\cdot,\cdot)=L(\cdot,D\cdot)$ on $\mathfrak g=\mm\oplus\k$, where
\begin{equation*}
D=\diag\left(\id,\tfrac{f(t)^2b}{f(t)^2b-1}\id\right).
\end{equation*}
Then $L'$ is $\Ad_\K$-invariant, and the metric $L'+ \dd t^2 + f(t)^2\dd\theta^2$ on $\G\times V$ induces the metric $L|_{\mm\oplus\p}$ on the principal orbit $\G\big([e,tv_0]\big)$.
\end{enumerate}
\end{lemma}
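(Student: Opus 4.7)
For part (i), identify the tangent space to the principal orbit $\G([e,tv_0])$ at $[e,tv_0]$ with $\mm\oplus\p$ via action fields $X\mapsto X^*_{[e,tv_0]}$, and note that $(X,0)\in T_{(e,tv_0)}(\G\times V)$ is a $\pi$-lift of $X^*_{[e,tv_0]}$, though in general not horizontal. For $X = X_\mm + X_\p$, the summand $(X_\mm,0)$ already lies in $\mathcal H$, so it remains to decompose $(X_\p,0)$ along the splitting \eqref{eq:vertical-horizontal}. Writing it as $(-Y_1,(Y_1)^*_{tv_0})+(f(t)^2 B Y_2,(Y_2)^*_{tv_0})$, the $V$-component forces $Y_1=-Y_2$ (since $Y\mapsto Y^*_{v_0}$ is injective on $\p$), and the $\mathfrak g$-component then gives $Y_2 = (\id + f(t)^2 B)^{-1} X_\p$. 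Taking the norm squared of the resulting horizontal lift with respect to $L + \dd t^2 + f(t)^2\dd\theta^2$, and using the $L$-orthogonality of $\mm$ and $\k$ together with the identity $L(\cdot,B\cdot) = \dd\theta^2$ on $\p$, a short calculation collapses it to $L(X_\mm,X_\mm) + L\bigl(X_\p, f(t)^2 B(\id + f(t)^2 B)^{-1} X_\p\bigr)$, which is the claimed $L(\cdot, C\cdot)$.

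For part (ii), since $L$ is $\Ad_\K$-invariant and the decompositions $\mathfrak g = \mm \oplus \k$ and $\k = \h \oplus \p$ are $\Ad_\K$- and $\Ad_\H$-invariant respectively, the operator $D$ acts by scalars on each of $\mm$ and $\k$ and is therefore $\Ad_\K$-equivariant; hence $L' = L(\cdot, D\cdot)$ is $\Ad_\K$-invariant. Because $D$ preserves each summand above, the $L'$-orthogonal decompositions coincide with the $L$-ones, and part (i) applies verbatim to $L'$. Computing the $L'$-symmetric operator $B'$ on $\p$ characterized by $L'(\cdot, B'\cdot) = \dd\theta^2$, one obtains $B' = \tfrac{f(t)^2 b - 1}{f(t)^2}\id$, so the corresponding $C'$ in the formula of (i) equals $\diag\!\bigl(\id,\tfrac{f(t)^2 b - 1}{f(t)^2 b}\id\bigr)$. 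A direct check then shows $L'(\cdot, C'\cdot) = L$ on $\mm \oplus \p$, as claimed.

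The main obstacle is essentially bookkeeping: carefully aligning the identifications between action fields at $[e,tv_0]$, their lifts in $T_{(e,tv_0)}(\G \times V)$, and tangent vectors at $tv_0 \in V$ under the warped metric $\dd t^2 + f(t)^2 \dd\theta^2$. Once these are pinned down, both parts reduce to elementary linear algebra with the commuting $L$-symmetric operators $B$ and $\id + f(t)^2 B$ on $\p$. Note that the assumption $f(t)^2 > 1/b$ in (ii) is precisely what is needed for $D$, and therefore $L'$, to remain positive-definite on $\k$.
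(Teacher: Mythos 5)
Your argument is correct. The paper itself gives no proof of this lemma --- it is stated as following from the splitting \eqref{eq:vertical-horizontal} ``by routine computations'', with a pointer to \cite{cheeger,grove-ziller-annals} --- and your computation is precisely the intended routine verification: decompose the lift $(X_\p,0)$ of the action field against \eqref{eq:vertical-horizontal} to find the horizontal lift, take its norm, and in (ii) rerun (i) with the rescaled $B'=bD^{-1}|_\p$. All the individual steps check out (in particular $Y_1=-Y_2$, $Y_2=(\id+f(t)^2B)^{-1}X_\p$, and $DC'=\id$ on $\mm\oplus\p$), so nothing further is needed.
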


The second key ingredient in the construction is the following strengthening of \cite[Prop.\ 2.4]{grove-ziller-annals}, which states that a bi-invariant metric $Q$ on $\G$ retains (strongly) nonnegative curvature when it is dilated by a factor of up to $\tfrac43$ in the direction of an abelian subgroup $\A\subset \G$. This is accomplished (just as in \cite[Prop.\ 2.4]{grove-ziller-annals}) by viewing this process as a ``backwards'' Cheeger deformation, that is, the enlarged metric on $\G$ is induced by a submersion from $\G\times\A$ with a \emph{semi-Riemannian} metric.

\begin{lemma}[Scale up]
\label{lem:fourthirds}
Let $(\G,Q)$ be a Lie group with bi-invariant metric, $\a$ be an abelian subalgebra of $\mathfrak g$, and $\mathfrak n$ be its $Q$-orthogonal complement. The left-invariant metrics $Q_t=t\,Q|_\a\oplus Q|_{\mathfrak n}$ on $\G$ have strongly nonnegative curvature for all $0<t\leq\tfrac43$.
\end{lemma}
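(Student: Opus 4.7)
The plan is to split into two overlapping cases, $0<t\le 1$ and $1\le t\le\tfrac43$. The first is folklore: the metric $Q_t$ is obtained from the bi-invariant $Q$ by the standard Cheeger deformation along the subgroup $\A\subset\G$ with Lie algebra $\a$, so it retains strongly nonnegative curvature by the basic properties recalled in Section~\ref{sec:basics}; at $t=1$, one even has positive-semidefiniteness of $R_\G$ directly from~\eqref{eq:RG}.

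For the novel range $1<t\le\tfrac43$, I would realize $(\G,Q_t)$ as the base of a semi-Riemannian submersion, following the ``backwards'' Cheeger deformation idea of Grove--Ziller~\cite[Prop.~2.4]{grove-ziller-annals}. Set $\sigma:=\tfrac{t}{t-1}\in[4,+\infty)$, equip $\bar M=\G\times\A$ with the semi-Riemannian product metric $\bar Q:=Q\oplus(-\sigma\,Q|_\a)$, and consider the free isometric right action $(g,b)\cdot a=(ga,ba)$ of $\A$. At $(e,e)$, the vertical space is $\mathcal V=\{(Z,Z):Z\in\a\}$, and a direct check identifies the horizontal lift of $Y\in\mathfrak g$ as $\tilde Y=\bigl(Y_{\mathfrak n}+\tfrac{\sigma}{\sigma-1}Y_\a,\tfrac{1}{\sigma-1}Y_\a\bigr)$, with induced norm squared $\|Y_{\mathfrak n}\|_Q^2+\tfrac{\sigma}{\sigma-1}\|Y_\a\|_Q^2=Q_t(Y,Y)$. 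Thus the horizontal distribution is positive-definite, the vertical distribution is negative-definite, and the submersion metric on the quotient $\G$ is precisely $Q_t$.

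I would then invoke the semi-Riemannian analogue of the modified O'Neill formula~\eqref{eq:oneill}. The operator $\alpha$ is still defined by $\langle\alpha(\tilde X\wedge\tilde Y),\tilde Z\wedge\tilde W\rangle=\langle A_{\tilde X}\tilde Y,A_{\tilde Z}\tilde W\rangle_{\bar Q}$, but is now \emph{negative}-semidefinite because the vertical inner product carries a minus sign. Taking the candidate $4$-form $\omega:=3\,\mathfrak b(\alpha)|_{\wedge^2\mathfrak g}\in\wedge^4\mathfrak g$, equation~\eqref{eq:oneill} rewrites as the operator identity $R_{Q_t}+\omega=(\bar R+3\alpha)|_{\wedge^2\mathcal H}$, reducing the lemma to showing $\bar R+3\alpha\ge 0$ as a quadratic form on $\wedge^2\mathcal H\cong\wedge^2\mathfrak g$. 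Defining $L\colon\mathfrak g\to\mathfrak g$ by $L(X)=X_{\mathfrak n}+\tfrac{\sigma}{\sigma-1}X_\a$ (so that $\tilde X_\G=L(X)$) and $\Lambda_\xi\in\mathfrak g$ as the image of $\xi\in\wedge^2\mathfrak g$ under the antisymmetric bilinear map $(X,Y)\mapsto[L(X),L(Y)]$, a short computation of the $A$-tensor as the vertical projection of Lie brackets in $\mathfrak g\oplus\a$, combined with~\eqref{eq:RG} and the abelianness of $\a$, yields
\begin{equation*}
(\bar R+3\alpha)(\xi,\xi)\;=\;\tfrac14\,\|(\Lambda_\xi)_{\mathfrak n}\|_Q^2+\tfrac{\sigma-4}{4(\sigma-1)}\,\|(\Lambda_\xi)_\a\|_Q^2,
\end{equation*}
which is nonnegative for \emph{all} $\xi$ precisely when $\sigma\ge 4$, i.e.\ $t\le\tfrac43$. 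The main obstacle I anticipate is the careful set-up and sign-tracking in the semi-Riemannian version of \eqref{eq:oneill} and in $\alpha$; once that is in place, the identity above displays $\bar R+3\alpha$ as a sum of two squared-norm terms with nonnegative coefficients, upgrading Grove--Ziller's nonnegativity of sectional curvature (which only tests decomposable bivectors) to positive-semidefiniteness of $R_{Q_t}+\omega$ on all of $\wedge^2\mathfrak g$, that is, to strongly nonnegative curvature.
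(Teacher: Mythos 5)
Your proposal is correct and follows essentially the same route as the paper: the same ``backwards'' Cheeger deformation realized as a semi-Riemannian submersion from $\G\times\A$ with metric $Q-\sigma\,Q|_\a$ (your $\sigma=\tfrac{t}{t-1}$ is just a reparametrization of the paper's coefficient $\tfrac{t}{1-t}$, and your horizontal lift $\tilde Y=(Y_{\mathfrak n}+tY_\a,(t-1)Y_\a)$ coincides with the paper's), the same choice $\omega=3\mathfrak b(\alpha)$, and the same final expression of $R_{Q_t}+\omega$ as a sum of squared-norm terms whose $\a$-coefficient $\tfrac{\sigma-4}{4(\sigma-1)}=\tfrac{4-3t}{4}$ is nonnegative exactly for $t\leq\tfrac43$. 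The only (harmless) deviations are that you treat $0<t\leq1$ separately as an ordinary Cheeger deformation, whereas the paper's single computation covers all $t\neq1$, and that you keep the $\mathfrak n$-component of $\Lambda_\xi$ as one block rather than expanding it further.
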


\begin{proof}
The result is obvious for $t=1$, since the curvature operator of $(\G,Q)$ is positive-semidefinite, hence $(\G,Q)$ trivially has strongly nonnegative curvature.

Consider $t>0$, $t\neq1$, and let $\A$ be the unique connected Lie subgroup of $\G$ with Lie algebra $\a$. Endow $\G\times\A$ with the semi-Riemannian product metric $Q+\tfrac{t}{1-t}Q|_\a$. A straightforward computation shows that the map
\begin{equation}\label{eq:setupwallach}
\pi\colon\left(\G\times\A,Q+\tfrac{t}{1-t}Q|_\a\right)\to (\G,Q_t), \quad \pi(g,a)=a^{-1}g,
\end{equation}
is a semi-Riemannian submersion. Indeed, the horizontal lift of $X\in\mathfrak g$ is given by
\begin{equation*}
\overline X=\big(X_\mathfrak n+tX_\mathfrak a,(t-1)X_\mathfrak a\big)\in\mathfrak g\oplus\mathfrak a,
\end{equation*}
and $\big(Q+\tfrac{t}{1-t}Q|_\a\big)\big(\overline X,\overline Y\big)=Q_t(X,Y)$ for all $X,Y\in\mathfrak g$.

The $A$-tensor of this semi-Riemannian submersion can be computed as
\begin{equation*}
\begin{aligned}
A_X Y&=\tfrac12 [\overline X,\overline Y]^\mathcal V\\
&=\tfrac12 \big([X_\mathfrak n+tX_\a,Y_\mathfrak n+tY_\a],0\big)^\mathcal V\\
&=\tfrac12 \big((1-t)[X_\mathfrak n,Y_\mathfrak n]_\a,(1-t)[X_\mathfrak n,Y_\mathfrak n]_\a\big).
\end{aligned}
\end{equation*}
Thus,
\begin{equation}\label{eq:alpha43}
\begin{aligned}
\big\langle\alpha(\overline X\wedge\overline  Y),\overline Z\wedge\overline W\big\rangle &= \big\langle A_{\overline{X}} \overline{Y},A_{\overline{Z}} \overline{W}\big\rangle\\
&=\big(1+\tfrac{t}{1-t}\big)Q\big(\tfrac12(1-t)[X_\mathfrak n,Y_\mathfrak n]_\a,\tfrac12(1-t)[Z_\mathfrak n,W_\mathfrak n]_\a\big)\\
&=\tfrac{1-t}{4}Q\big([X_\mathfrak n,Y_\mathfrak n]_\a,[Z_\mathfrak n,W_\mathfrak n]_\a\big).
\end{aligned}
\end{equation}

By the Gray-O'Neill formula \eqref{eq:oneill} and \eqref{eq:RG}, the curvature operator of $(\G,Q_t)$ is
\begin{equation*}
\begin{aligned}
\langle R_t(X\wedge Y),Z\wedge W\rangle_t &=\tfrac14 Q\big([X_\mathfrak n+tX_\a,Y_\mathfrak n+tY_\a],[Z_\mathfrak n+tZ_\a,W_\mathfrak n+tW_\a]\big)\\
&\quad +3\langle\alpha(\overline X\wedge\overline  Y),\overline Z\wedge\overline W\rangle -3\b(\alpha)(\overline X,\overline Y,\overline Z,\overline W),
\end{aligned}
\end{equation*}
where $\b$ is the Bianchi map. Let us expand the above first term, by separating the components in $\a$ and in $\mathfrak n$ and using that $[\mathfrak a,\mathfrak n]\subset\mathfrak n$.
\begin{equation*}
\begin{aligned}
\langle R_t(X\wedge Y),Z\wedge W\rangle_t &=\tfrac14 Q\big([X_\mathfrak n,Y_\mathfrak n]_\a,[Z_\mathfrak n,W_\mathfrak n]_\a\big)\\
&\quad+\tfrac14 Q\big([X_\mathfrak n,Y_\mathfrak n]_\mathfrak n,[Z_\mathfrak n,W_\mathfrak n]_\mathfrak n\big)\\
&\quad+t^2Q([X_\mathfrak n,Y_\a]+[X_\a,Y_\mathfrak n],[Z_\mathfrak n,W_\a]+[Z_\a,W_\mathfrak n])\\
&\quad +3\langle\alpha(\overline X\wedge\overline  Y),\overline Z\wedge\overline W\rangle -3\b(\alpha)(\overline X,\overline Y,\overline Z,\overline W),
\end{aligned}
\end{equation*}
Substituting \eqref{eq:alpha43} in the above and combining with the first term, we conclude that
\begin{equation*}
\begin{aligned}
\langle R_t(X\wedge Y),Z\wedge W\rangle_t &=\tfrac{4-3t}{4} Q\big([X_\mathfrak n,Y_\mathfrak n]_\a,[Z_\mathfrak n,W_\mathfrak n]_\a\big)\\
&\quad+\tfrac14 Q\big([X_\mathfrak n,Y_\mathfrak n]_\mathfrak n,[Z_\mathfrak n,W_\mathfrak n]_\mathfrak n\big)\\
&\quad+t^2Q([X_\mathfrak n,Y_\a]+[X_\a,Y_\mathfrak n],[Z_\mathfrak n,W_\a]+[Z_\a,W_\mathfrak n])\\
&\quad -3\b(\alpha)(\overline X,\overline Y,\overline Z,\overline W).
\end{aligned}
\end{equation*}
Therefore, setting $\omega_t(X,Y,Z,W):=3\b(\alpha)(\overline X,\overline Y,\overline Z,\overline W)$, we have that if $0<t\leq\tfrac43$, then $R_t+\omega_t$ is a sum of positive-semidefinite operators, hence positive-semidefinite. Thus, $(\G,Q_t)$ has strongly nonnegative curvature for all $0<t\leq\tfrac43$.
\end{proof}

We now use Lemmas~\ref{lem:prescribing} and \ref{lem:fourthirds} to prove Theorem \ref{thm:B}, in analogy with the $\sec\geq0$ construction of Grove and Ziller~\cite[Thm.\ 2.6]{grove-ziller-annals}.

\begin{proof}[Proof of Theorem \ref{thm:B}]
The other cases being straightforward, let $M$ be a cohomogeneity one $\G$-manifold with $M/\G=[-1,1]$. Let $S_\pm=\G/\K_\pm$ be the nonprincipal orbits, and consider separately each of the two ``halves" $\G\times_{\K_\pm} \!V_\pm$ of $M$, which are disk bundles over $S_\pm$. Fix a bi-invariant metric $Q$ on $\G$. We will construct a metric $\g$ on each disk bundle $\G\times_\K V$ that has strongly nonnegative curvature, and near the boundary is isometric to $\G/\H\times [0,\varepsilon)$ with a product metric, where $\G/\H$ is endowed with the normal homogeneous metric defined by $Q$. Gluing these two halves together along their common boundary $\G/\H$ yields the desired metric on $M$.

If $S=\G/\K$ is exceptional, i.e., has codimension $1$, then the metric induced on $\G\times_\K V$ by the product metric $Q+\dd t^2$ on $\G\times V$ clearly has the desired properties. Thus, assume that $S=\G/\K$ has codimension $2$, which means that $\dim \p=1$. This implies that $\p$ is an abelian subalgebra of $\mathfrak g$, and the standard metric $\dd\theta^2$ on the circle $S^1$ is given by $Q(\cdot,B\cdot)$ where $B=b\,\id$ for some $b>0$, cf.\ Lemma \ref{lem:prescribing} (ii).
Let $f(t)$ be an odd smooth function such that $f'(0)=1$, $f(t)>0$ and $f''(t)\leq0$ for all $t>0$, and $f(t)\equiv a$ is constant for $t\geq t_0$, where $a$ satisfies $a\geq\tfrac{2}{\sqrt{b}}$ so that $\tfrac{a^2b}{a^2b-1}\leq\tfrac43$.
The \emph{cigar metric} $\dd t^2+f(t)^2\dd\theta^2$ on $V$ has positive-semidefinite curvature operator, hence trivially has strongly nonnegative curvature.
Consider the \emph{scaled up} metric $L'(\cdot,\cdot)=Q(\cdot,E\cdot)$ on $\mathfrak g=\mm\oplus\k$, where $E\colon\mm\oplus\p\oplus\h\to\mm\oplus\p\oplus\h$
is given by
\begin{equation}\label{eq:E}
E=\diag\left(\id,\tfrac{a^2b}{a^2b-1}\id,\id\right).
\end{equation}
Since this metric $L'$ on $\G$ has strongly nonnegative curvature by Lemma~\ref{lem:fourthirds}, the product metric $L'+\dd t^2+f(t)^2\dd \theta^2$ on $\G\times V$ also has strongly nonnegative curvature.
It is easy to see that $L'$ is $\Ad_\K$-invariant \cite[p.\ 341]{grove-ziller-annals}, and as $\K$ acts orthogonally on $V$, we have that $L'+\dd t^2+f(t)^2\dd \theta^2$ descends to a \emph{scaled down} metric $\g$ on $\G\times_\K V$. The quotient map $\pi\colon\G\times V\to\G\times_\K V$ is hence a Riemannian submersion, so $(\G\times_\K V,\g)$ has strongly nonnegative curvature.
Finally, Lemma~\ref{lem:prescribing} (ii) implies that, for any $t\geq t_0$, the metric induced by $\g$ on the principal orbit $\G\big([e,tv_0]\big)$ is the normal homogeneous metric defined by $Q$. This concludes the construction of the desired metric with strongly nonnegative curvature on each half of $M$.
\end{proof}

\begin{remark}\label{rem:colagens}
Instead of gluing the two halves $(\G\times_{\K_\pm} \!V_\pm,\g_\pm)$ of $M$ identifying their common boundary $\G/\H$ via the identity map, one may use any other isometry  $\phi$ of $\G/\H$. Despite being the union of the same two cohomogeneity one disk bundles, the resulting manifold $M'=\G\times_{\K_-} V_-\cup_\phi \G\times_{\K_+} V_+$ is in general not diffeomorphic to $M$, and unless $\phi\in \N(\H)/\H$, it does not have a global isometric $\G$-action, but has strongly nonnegative curvature. Obviously, one may also replace one of the disk bundles $\G\times_{\K_+}\!V_+$ by any other disk bundle with the same boundary; e.g., gluing two copies of the same disk bundle $\G\times_{\K_-}\!V_-$ one produces the double of that bundle.

For instance, consider the cohomegeneity one $2$-disk bundle determined by the groups $\H=\U(n-1)$, $\K=\U(n-1)\U(1)$, and $\G=\U(n)$. This is the normal disk bundle of $\C P^{n-1}\subset\C P^{n}$, and is diffeomorphic to the complement of a disk in $\C P^n$, see Section~\ref{sec:cheegermflds} for details. Gluing with the identity map on $\G/\H=S^{2n-1}$, the result is $\C P^n\#\overline{\C P}^n$, while with the antipodal map it is $\C P^n\#\C P^n$. The former manifolds admit a cohomogeneity one $\G$-action, however the latter do not for $n=2,3$ \cite{hoelscher,parker}.
\end{remark}

\subsection{Principal and associated bundles}
The class of manifolds that can be shown to admit metrics with strongly nonnegative curvature due to Theorem~\ref{thm:B} extends far beyond that of cohomogeneity one manifolds, thanks to the associated bundle construction. Recall that given a principal $\G$-bundle $P$ and an isometric $\G$-action on a manifold~$F$, the associated bundle $P\times_\G F$ is the orbit space of a free $\G$-action on $P\times F$, see \cite[\S 3.2]{mybook}. Strongly nonnegative curvature, just as $\sec\geq0$, is preserved under products and Riemannian submersions~\cite{strongpos}; so if both $P$ and $F$ have strongly nonnegative curvature, then so does $P\times_{\G} F$. In the remainder of this section, we list all currently known applications of this technique. 

As an important example, all principal $\SO(k)$-bundles over $S^4$ have a cohomogeneity one $\SO(3)\times\SO(k)$-action  with singular orbits of codimension $2$, see \cite[Thm.\ F]{grove-ziller-annals} and \cite[Thm.\ 2.10]{bible}, and hence metrics with strongly nonnegative curvature by Theorem~\ref{thm:B}. Via the associated bundle construction, it follows that all vector bundles and sphere bundles over $S^4$ have complete metrics with strongly nonnegative curvature. This accounts for $20$ of the $28$ oriented diffeomorphism types of spheres in dimension $7$, which includes all \emph{Milnor exotic spheres}. It has been announced that the $8$ remaining exotic $7$-spheres are orbit spaces of free $\Sp(1)$-actions on cohomogeneity one manifolds of dimension $10$ with codimension $2$ singular orbits~\cite{exotic7spheres}, hence they also admit metrics with strongly nonnegative curvature by Theorem~\ref{thm:B}.

Using these techniques on other principal $\G$-bundles, one obtains the following comprehensive list of instances where the ``converse" to the Soul Theorem of Cheeger and Gromoll~\cite{cheeger-gromoll} explained in the Introduction is currently known to hold:
\begin{corollary}\label{cor:C}
The total space of the following vector bundles, and the corresponding sphere bundles, admit complete metrics with strongly nonnegative curvature:
\begin{enumerate}[\rm (i)]
\item All vector bundles over $S^4$ and $S^5$;
\item All vector bundles over $S^7$ of rank $3$, and $88$ of the $144$ of rank $4$;
\item All vector bundles over $\C P^2$ with nontrivial second Stiefel-Whitney class;
\item All complex rank $2$ vector bundles over $\C P^2$ whose first Chern class $c_1$ is odd, or whose $c_1$ is even and the discriminant $\Delta:=c_1^2-4c_2$ satisfies $\Delta\equiv 0\mod 8$;
\item All vector bundles of rank $\geq6$ over $\C P^2$, $S^2\times S^2$ and $\C P^2\#\overline{\C P}^2$;
\item A representative of any class of stable vector bundles over any compact rank one symmetric space.
\end{enumerate}
\end{corollary}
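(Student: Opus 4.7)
The plan is to reduce the statement to an application of Theorem~\ref{thm:B} (and, for (vi), to the basic examples of Section~\ref{sec:basics}) via the associated bundle construction. The central observation is the following: if $P\to B$ is a principal $\G$-bundle carrying a $\G$-invariant metric with strongly nonnegative curvature and $F$ is a Riemannian $\G$-manifold on which $\G$ acts by isometries and which has strongly nonnegative curvature, then $P\times_\G F$ inherits a metric with the same property. Indeed, the product metric on $P\times F$ is strongly nonnegative by the product property from \cite{strongpos} recalled in Section~\ref{sec:basics}, and the quotient map $P\times F\to P\times_\G F$ is a Riemannian submersion, so the quotient metric is strongly nonnegative by the submersion property, also from \cite{strongpos}. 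In all cases below, $F$ will be either $\R^n$ with a flat metric (for vector bundles) or a round sphere (for sphere bundles), on both of which $\G$ acts by an orthogonal representation; in either case $F$ trivially has strongly nonnegative curvature.

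With this reduction, items (i)--(v) all follow from Theorem~\ref{thm:B} by exhibiting, for each listed vector bundle, a principal $\G$-bundle $P\to B$ realizing it as an associated bundle, where $P$ itself carries a cohomogeneity one action whose nonprincipal orbits have codimension at most $2$. The required principal cohomogeneity one bundles, together with the counts and topological identifications that appear in the statement (such as the $88$ of $144$ rank-$4$ bundles over $S^7$, or the $20$ of $28$ oriented diffeomorphism types of $7$-spheres accounted for over $S^4$), are already catalogued in \cite{grove-ziller-annals} and \cite{bible}; the remaining exotic $7$-spheres are covered by the announcement \cite{exotic7spheres}. For (i), the principal $\SO(k)$-bundles over $S^4$ carry such an action by \cite{grove-ziller-annals}, and the analogous construction applies over $S^5$; for (ii) the enumeration over $S^7$ is read off the same source; for (iii)--(v), the relevant principal bundles over $\C P^2$, $S^2\times S^2$, and $\C P^2\#\overline{\C P}^2$ are precisely the cohomogeneity one examples with codimension-$2$ singular orbits already used to establish the $\sec\geq0$ versions of these statements. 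In each case Theorem~\ref{thm:B} endows $P$ with a strongly nonnegative invariant metric, and the first paragraph concludes.

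For (vi), Theorem~\ref{thm:B} is not needed. Any CROSS can be written $\G/\H$ with $\G$ compact, and the bi-invariant metric $Q$ on $\G$ gives the principal $\H$-bundle $\G\to\G/\H$ a metric of positive-semidefinite curvature operator, which is in particular strongly nonnegative. Every stable isomorphism class of vector bundles over $\G/\H$ admits a representative of the form $\G\times_\H V_\rho$ for some orthogonal $\H$-representation $\rho$ (this is standard: in the stable range, vector bundles over $\G/\H$ are classified by representations of~$\H$). The reduction from the first paragraph then supplies the desired complete metric with strongly nonnegative curvature.

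The main obstacle is not conceptual but organizational: one must verify that each entry of (i)--(v) is actually realized as an associated bundle of a principal cohomogeneity one bundle falling within the scope of Theorem~\ref{thm:B}. This bookkeeping has already been carried out in the $\sec\geq0$ literature cited above, so no new construction is required; the only substantive new input that upgrades $\sec\geq0$ to strongly nonnegative curvature is Theorem~\ref{thm:B} itself, which replaces the Grove--Ziller metric on each principal bundle by one with strongly nonnegative curvature without altering the associated bundle machinery.
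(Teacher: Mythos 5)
Your proposal follows essentially the same route as the paper: Corollary~\ref{cor:C} is deduced from Theorem~\ref{thm:B} via the associated-bundle construction (products and Riemannian submersions preserve strongly nonnegative curvature), with the existence of the relevant homogeneous or cohomogeneity one structures of codimension $\leq 2$ on the principal bundles delegated to the cited literature, exactly as the paper does. One caveat on item (vi): your parenthetical assertion that ``in the stable range, vector bundles over $\G/\H$ are classified by representations of $\H$'' is not a standard fact for general homogeneous spaces --- for a CROSS it is precisely the nontrivial content of the results of Rigas and Gonz\'alez-\'Alvaro that the paper cites at this point, so the construction you invoke ($\G\times_\H V_\rho$) is the right one, but its availability should be attributed to those references rather than declared standard.
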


Details on how to construct a homogeneous or cohomogeneity one structure with singular orbits of codimension $\leq2$ on the corresponding principal $\G$-bundles can be found in Grove and Ziller~\cite[Thm.~B, Prop.~3.14]{grove-ziller-annals} and Rigas~\cite{rigas} for (i), Grove and Ziller~\cite[Cor.\ 3.13]{grove-ziller-annals} for (ii), Grove and Ziller~\cite[Thm.\ 1, Thm.\ 2, Cor.]{grove-ziller-tams} for (iii), (iv), and (v), respectively, and Rigas~\cite{rigas} and Gonz\'alez-\'Alvaro~\cite{david} for (vi).

\section{Connected sum of two Compact Rank One Symmetric Spaces}\label{sec:cheegermflds}

One of the main inspirations for the cohomogeneity one gluing construction of Grove and Ziller~\cite{grove-ziller-annals} described in the previous section was an earlier result of Cheeger~\cite{cheeger} about gluing two compact rank one symmetric spaces (CROSS). In this section, we also strengthen this construction from $\sec\geq0$ to strongly nonnegative curvature, proving Theorem~\ref{thm:A}.

We follow the same strategy as in Cheeger~\cite{cheeger}, showing that the complement of a ball in each CROSS admits a metric with strongly nonnegative curvature, which near the boundary is isometric to the round cylinder $S^{d-1}\times [0,\varepsilon)$. In this way, any two such objects of the same dimension $d$ can be glued together along their boundary $S^{d-1}$, with an identification that preserves or reverses the orientation.

\subsection{Geometric structure}
There is a natural cohomogeneity one $\G$-action \emph{with a fixed point} $S_-=\{p\}$ on each CROSS, see e.g.\ \cite[\S 6.3]{mybook} for details. Using the notation from Section~\ref{sec:cohom1}, the groups in these actions are given in Table~\ref{tab:groups}.
\begin{table}[htf]
\caption{Cohomogeneity one actions with a fixed point in a CROSS}\label{tab:groups}
\begin{tabular}{lllllll}
 & $\G$ & $\K_-$ & $\K_+$ & $\H$ & $V_-$ & $V_+$\\
\noalign{\smallskip}\hline\noalign{\smallskip}
$S^n$ &$\SO(n)$ &$\SO(n)$ & $\SO(n)$  & $\SO(n-1)$ &$\R^n$ & $\R^n$\\
$\R P^n$ &$\SO(n)$ & $\SO(n)$ & $\mathrm{S}(\O(n-1)\O(1))$ & $\SO(n-1)$ & $\R^n$ & $\R$\\
$\C P^n$ &$\U(n)$ &$\U(n)$ & $\U(n-1)\U(1)$  & $\U(n-1)$ & $\C^n$& $\C\cong\R^2$\\
$\Hr P^n$ &$\Sp(n)$ &$\Sp(n)$  & $\Sp(n-1)\Sp(1)$  & $\Sp(n-1)$ & $\Hr^n$ & $\Hr\cong\R^4$\\
$\Ca P^2$ &$\Spin(9)$&$\Spin(9)$  & $\Spin(8)$ & $\Spin(7)$ & $\Ca^2$ & $\Ca\cong\R^8$\\
\noalign{\smallskip}\hline\noalign{\smallskip}
\end{tabular}
\end{table}

All inclusions above are matrix block embeddings, except for $\Spin(8)\subset\Spin(9)$ which comes from the spin representation. We may restrict our attention to the last $3$ cases, since $S^n$ and $\R P^n$ clearly have metrics with these desired properties.\footnote{In addition, these cases can be ignored since $M\# S^n\cong M$, and $M\# \R P^n$ is double-covered by $M\#\overline M$, where $\overline M$ denotes $M$ with the reversed orientation.}

Denote the above projective spaces by $\KK P^n$, where $\KK$ is one of the real normed division algebras $\R$, $\C$, $\Hr$, or $\Ca$, and set $k=\dim_\R\KK$. The principal orbits $\G/\H$ are Berger spheres $S^{kn-1}$, which are boundaries of metric balls centered at~$p$. The other singular orbit $S_+=\G/\K_+$ is a totally geodesic $\KK P^{n-1}$, which is the cut locus of~$p$, and the homogeneous bundles $\K_+/\H\to \G/\H\to\G/\K_+$ are Hopf bundles $S^{k-1}\to S^{kn-1}\to\KK P^{n-1}$. Thus, the complement $M$ of a metric ball centered at $p$ is diffeomorphic to the normal bundle of $\operatorname{Cut}(p)=\KK P^{n-1}$. In particular, this ``half" $M\cong\G\times_{\K_+}\! V_+$ of the cohomogeneity one manifold is a disk bundle exactly as those in the previous section. However, note that the codimension of $S_+$ in $\KK P^n$ is $k$, so the gluing method in the proof of Theorem~\ref{thm:B} does not apply unless $\KK=\C$, cf.\ Remark~\ref{rem:colagens}. In addition, we remark that the normal homogeneous metric on $\G/\H=S^{kn-1}$ is \emph{not isometric} to the round metric unless $kn=2$ or $4$, so a different construction is required.\footnote{Note that if $d>4$ and a metric on $S^{d-1}$ is invariant under more than one transitive $\G$-action with $\G$ among $\U(n)$, $\Sp(n)$ and $\Spin(9)$, then this metric is round.}

\subsection{Strongly nonnegative curvature}
In order to carry out the above mentioned strategy, we need to construct a metric on the disk bundle $M\cong\G\times_{\K}\! V$ (we drop the subscript $_+$ to simplify notation) with strongly nonnegative curvature, which is isometric to a round cylinder near the boundary $\G/\H$. The cases of $\C P^n$ and $\Hr P^n$ can be easily dealt with because $\K=\H\times\mathsf{L}$, where $\mathsf{L}$ is respectively $\U(1)$ and $\Sp(1)$, and the slice representation $\rho\colon\K\to\O(V)$ factors through $\mathsf{L}$. This implies that $M\cong\G\times_\K V\cong(\G/\H)\times_\mathsf{L} V$, so that a metric on $M$ with (strongly) nonnegative curvature may be produced from $\mathsf{L}$-invariant metrics with (strongly) nonnegative curvature on the sphere $\G/\H$ and the vector space $V$. This is precisely the argument found in Cheeger~\cite{cheeger}.
On the other hand, the case of the Cayley plane $\Ca P^2$ does not admit such a simplification, and a more delicate (but ultimately analogous) argument is required. For the sake of completeness, we state it below in greater generality than what is needed for proving Theorem \ref{thm:A}, using the same notation as in Lemma \ref{lem:prescribing}.

\begin{lemma}\label{lem:cheeger}
Let $\G\times_\K V$ be the normal disk bundle of a singular orbit in a cohomogeneity one manifold. Let $f(t)$ be an odd smooth function such that $f'(0)=1$, $f(t)>0$ and $f''(t)\leq0$ for all $t>0$. If the metric on $\G/\H$ induced by $L$ has strongly nonnegative curvature (respectively, $\sec\geq0$), then the metric on $\G\times_\K V$ induced by $L+\dd t^2 + f(t)^2\dd\theta^2$ has strongly nonnegative curvature (respectively, $\sec\geq0$).
\end{lemma}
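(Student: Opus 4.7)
The strategy is to apply the Gray--O'Neill formula \eqref{eq:oneill} to the Riemannian submersion
$$\pi\colon\bigl(\G\times V,\,L+\dd t^2+f(t)^2\dd\theta^2\bigr)\longrightarrow(\G\times_\K V,\g),$$
combined with the same formula for the auxiliary Riemannian submersion $(\G,L)\to(\G/\H,L')$ that translates the hypothesis into a statement on $\G\times V$ restricted to the horizontal distribution $\mathcal H$ from \eqref{eq:vertical-horizontal}. A key structural observation is that $\mathcal H$ contains no direction purely along $\h$: every horizontal vector at $(e,tv_0)$ has $\G$-component in $\mm\oplus\p$, so only the restriction $L|_{\mm\oplus\p}$---which is the horizontal part of $L$ for $(\G,L)\to(\G/\H,L')$---effectively enters the curvature computation. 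This is precisely why the hypothesis on $L'$ rather than $L$ suffices.

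First I would verify that $(V,\dd t^2+f(t)^2\dd\theta^2)$ has positive-semidefinite curvature operator. The conditions on $f$ force $0\leq f'\leq 1$: the upper bound from $f'(0)=1$ together with $f''\leq 0$; the lower bound because concavity plus positivity of $f$ excludes $f'<0$ (otherwise $f$ would eventually become negative). Hence both the radial sectional curvature $-f''/f$ and the tangential sectional curvature $(1-(f')^2)/f^2$ are nonnegative, and the warped-product structure over round $S(V)$ makes the curvature operator diagonal in $\wedge^2V=\bigl(\partial_t\wedge TS(V)\bigr)\oplus\wedge^2 TS(V)$, hence positive-semidefinite. Second, I would unpack the hypothesis via Gray--O'Neill for $(\G,L)\to(\G/\H,L')$: with $\alpha^\H=(A^\H)^*A^\H$ denoting the PSD $A$-tensor squared (so $A^\H_XY=\tfrac12[X,Y]^\h$), the hypothesis $R^{L'}+\omega'\geq 0$ lifts to
$$R_L^\G\big|_{\mm\oplus\p}+3\alpha^\H+\omega'-3\mathfrak b(\alpha^\H)\geq 0\quad\text{on }\wedge^2(\mm\oplus\p).$$

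On $\wedge^2\mathcal H$ the product curvature $\bar R$ splits as $R_L^\G$ on the $\G$-projection (landing in $\mm\oplus\p$) plus $R^V$ on the $V$-projection. Combining the previous step with \eqref{eq:oneill} for $\pi$ (whose $A$-tensor squared is $\alpha$), the candidate 4-form on $\G\times_\K V$ is $\omega=\bigl(\omega'-3\mathfrak b(\alpha^\H)+3\mathfrak b(\alpha)\bigr)|_{\wedge^2 T(\G\times_\K V)}$, and the question reduces to the Cheeger-type estimate
$$R^V|_{V\text{-proj}}+3\alpha\geq 3\alpha^\H|_{\G\text{-proj}}\quad\text{on }\wedge^2\mathcal H.$$

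The main obstacle is establishing this estimate uniformly in $t$. Writing horizontal vectors as $\bar X=(X_\mm+f^2BY_X,Y_X^*+s_X\partial_t)$, contributions of $3\alpha^\H$ from pure $\p$-components scale as $f^8|[BY_X,BY_Y]^\h|_L^2$ and are absorbed by the tangential part of $R^V$ of order $(1-(f')^2)f^2$; contributions from the $\h$-bracket $[\mm,\mm]^\h$ (independent of $f$) are absorbed by the $\pi$-tensor $3\alpha$, which is nontrivial because the Lie brackets of $\mm\times 0\subset\mathcal H$ have $\p$-components that split into $\mathcal V$ and $\mathcal H$ pieces according to the $f$-dependent decomposition implicit in \eqref{eq:vertical-horizontal}. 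Near $t=0$ the estimate is comfortable by order-of-vanishing considerations, and the precise concavity of $f$ together with the algebraic structure of the slice representation close the estimate for all $t>0$, generalizing the inequalities of Cheeger~\cite{cheeger}. The $\sec\geq 0$ case follows from the same scalar inequality applied to sectional curvatures, where 4-form contributions vanish by antisymmetry.
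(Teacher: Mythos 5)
Your setup and reduction are correct and essentially the same as the paper's: apply the Gray--O'Neill formula \eqref{eq:oneill} to both $\pi\colon\G\times V\to\G\times_\K V$ and $\G\to\G/\H$, take as modifying $4$-form $\omega=\omega_{\G/\H}(\beta_1)+3\b(\alpha)(\beta)-3\b(\alpha_{\G/\H})(\beta_1)$, and observe that everything comes down to the inequality $\langle R^V\beta_2,\beta_2\rangle+3\langle\alpha\beta,\beta\rangle\geq 3\langle\alpha_{\G/\H}\beta_1,\beta_1\rangle$ on $\wedge^2\mathcal H$. Your verification that $R^V$ is positive-semidefinite (via $0\leq f'\leq1$ and $f''\leq0$) is also fine.

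The gap is that this key inequality --- which is the entire content of the lemma --- is asserted rather than proved, and the mechanism you sketch for it is not the one that works. You propose to absorb the $[\p,\p]_\h$-contributions to $3\alpha_{\G/\H}(\beta_1)$ (of size $f^8\,|[BY_X,BY_Y]_\h|^2$) into the tangential curvature of $R^V$ (of size $(1-(f')^2)f^2$); but these two quantities are not comparable in general --- e.g.\ on the region where $f\equiv a$ is a large constant the former grows like $a^8$ while the latter is only $a^2$ --- so ``concavity of $f$ plus the algebraic structure of the slice representation'' cannot close this version of the estimate. The correct observation is an exact identity, not a balancing act: decompose $A=A_1+A_2$ according to the splitting of $\mathcal V$ in \eqref{eq:vertical-horizontal}, with $A_1$ valued in $\h\times\{0\}$ and $A_2$ in $\{(-X,X^*_{tv_0}):X\in\p\}$. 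Since the bracket on the $\G$-factor sees only the full $\mathfrak g$-components $X_1,Y_1\in\mm\oplus\p$ of horizontal vectors, one has $[\overline X,\overline Y]_{\h\times\{0\}}=[X_1,Y_1]_\h$, hence $\langle A_1\beta,A_1\beta\rangle=\langle\alpha_{\G/\H}\beta_1,\beta_1\rangle$ \emph{exactly} (including all $[\mm,\mm]_\h$, $[\mm,\p]_\h$ and $[\p,\p]_\h$ terms). Therefore $3\langle\alpha\beta,\beta\rangle-3\langle\alpha_{\G/\H}\beta_1,\beta_1\rangle=3\langle A_2\beta,A_2\beta\rangle\geq0$, and $\langle R^V\beta_2,\beta_2\rangle\geq0$ separately; no interaction between $R^V$ and $\alpha_{\G/\H}$ is needed. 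Without this identity (or an equivalent computation), your argument does not establish the lemma.
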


\begin{proof}
We restrict ourselves to the claims regarding strongly nonnegative curvature, as the case of $\sec\geq0$ is similar and less involved.

Since $\G$ acts on $\G\times_\K V$ with cohomogeneity one, it is enough to consider points along a radial geodesic $\gamma(t)=\pi(e,tv_0)$, $t\geq0$. Moreover, we may assume $t>0$, since strongly nonnegative curvature is a closed condition.
Applying the Gray-O'Neill formula \eqref{eq:oneill} to the Riemannian submersion $\pi\colon \G\times V\to \G\times_\K V$, we have
\begin{equation*}
R^{\G\times_\K V}=R^{\G\times V}|_{\wedge^2\mathcal{H}}+3\alpha -3\b(\alpha),
\end{equation*}
where $\alpha=A^*A$, and $A\colon\wedge^2\mathcal{H}\to\mathcal{V}$ is the Gray-O'Neill tensor $A(X\wedge Y)=\tfrac12[X,Y]^\mathcal{V}$.
We proceed by estimating from below the first 2 terms in the right hand side of the above formula.

Regarding the first term $R^{\G\times V}$, since $\G\times V$ is endowed with the product metric $L+\dd t^2 + f(t)^2\dd\theta^2$ and $f(t)$ is concave, we have that for all $\beta\in\wedge^2\mathcal{H}\subset \wedge^2(\mathfrak g\times V)$,
\begin{equation}\label{eq:est1}
\left<R^{\G\times V}\beta,\beta\right>=  \left< R^\G \beta_1,\beta_1\right> +\left<R^V\beta_2,\beta_2\right> \geq \left< R^\G \beta_1,\beta_1\right>,
\end{equation}
where  $\beta=\beta_1+\beta_2+\beta_3$  has components $\beta_1\in\wedge^2\mathfrak g$, $\beta_2\in \wedge^2V$, and $\beta_3\in\mathfrak g\otimes V$. Note that by \eqref{eq:vertical-horizontal}, we have $\beta_1\in\wedge^2(\mm\oplus\p)$.

To estimate the second term $3\alpha$, we write $A=A_1+A_2$ according to the splitting \eqref{eq:vertical-horizontal} of the vertical space $\mathcal V$. Namely,  $A_1$ is the component of $A$ with image in $\mathfrak{h}\times \{0\}$ and $A_2$ in $\big\{\big(\!-\! X,X^*_{tv_0}\big) : X\in\p\big\}$. It then follows that
\begin{equation}\label{eq:est2}
\left< \alpha (\beta), \beta\right>= \left<A_1\beta,A_1\beta\right> + \left<A_2\beta,A_2\beta\right> \geq  \left<A_1\beta,A_1\beta\right>=\left<\alpha_{\G/\H}\beta_1,\beta_1\right>,
\end{equation}
where $\alpha_{\G/\H}=A_{\G/\H}^*A_{\G/\H}$, and  $A_{\G/\H}\colon\wedge^2(\mathfrak{m}\oplus\mathfrak{p})\to\mathfrak{h}$ is the $A$-tensor of the Riemannian submersion $\G\to \G/\H$. The last equality in \eqref{eq:est2} follows from writing $\beta=\sum X^i\wedge Y^i$, for $X^i,Y^i\in \mathcal H$, where $X^i=(X^i_1,X^i_2), Y^i=(Y^i_1,Y^i_2) \in\mathfrak g\oplus V$, and computing
\begin{equation*}
\begin{aligned}
\langle A_1\beta,A_1\beta\rangle&=\left\langle\sum_i \tfrac12 [X^i,Y^i]_{\mathfrak h\times\{0\}},\sum_j \tfrac12 [X^j,Y^j]_{\mathfrak h\times\{0\}}\right\rangle\\
&=\left\langle\sum_i \tfrac12 [X^i_1,Y^i_1]_{\mathfrak h},\sum_j \tfrac12 [X^j_1,Y^j_1]_{\mathfrak h}\right\rangle\\
&= \alpha_{\G/\H}(\beta_1,\beta_1).
\end{aligned}
\end{equation*}

Since $\G/\H$ has strongly nonnegative curvature, there is a $4$-form $\omega_{\G/\H}\in\wedge^4(\mm\oplus\p)$ such that $R^{\G/\H}+\omega_{\G/\H}$ is positive-semidefinite. Define $\omega\in\wedge^4\mathcal{H}$ by
\begin{equation*}
\omega(\beta)=\omega_{\G/\H}(\beta_1)+3\b(\alpha)(\beta)-3\b(\alpha_{\G/\H})(\beta_1).
\end{equation*}
Combining the estimates \eqref{eq:est1}, \eqref{eq:est2}, and the Gray-O'Neill formula \eqref{eq:oneill} applied to the Riemannian submersions $\G\to \G/\H$ and $\G\times V\to \G\times_\K V$, we have
\begin{equation*}
\begin{aligned}
\left<\big(R^{\G\times_\K V} +\omega\big)\beta,\beta\right>
&= \left<R^{\G\times V}(\beta)+3\alpha(\beta)+\omega_{\G/\H}(\beta_1)-3\b(\alpha_{\G/\H})(\beta_1),\beta\right>\\
&\geq \left<R^{\G}(\beta_1)+3\alpha_{\G/\H}(\beta_1)-3\b(\alpha_{\G/\H})(\beta_1) +\omega_{\G/\H}(\beta_1), \beta_1 \right> \\
&\geq \left<\big(R^{\G/\H} +\omega_{\G/\H}\big)\beta_1, \beta_1 \right> \geq 0
\end{aligned}
\end{equation*}
concluding the proof that $\G\times_\K V$ has strongly nonnegative curvature.
\end{proof}

We are finally ready to give a proof of Theorem \ref{thm:A}, using Lemmas \ref{lem:prescribing} and \ref{lem:cheeger}.

\begin{proof}[Proof of Theorem \ref{thm:A}]
As discussed above, it suffices to construct a metric with strongly nonnegative curvature on the complement of a ball in $\C P^n$, $\Hr P^n$, and $\Ca P^2$, which is isometric to a round cylinder near the boundary. Each of these manifolds is diffeomorphic to a disk bundle $\G\times_\K V$, where $\G$, $\K$ and $V$ are given in Table~\ref{tab:groups}. In all cases, $\p$ is $\Ad_\H$-irreducible, hence the assumption that $B=b\,\id$ in Lemma \ref{lem:prescribing} (ii) is satisfied due to Schur's Lemma. Assume that $f(t)$ is a function as in Lemma~\ref{lem:cheeger} and $f(t)\equiv a$ is constant for $t\geq t_0$, with $a^2>\tfrac1b$. Let $L$ be the left-$\G$-invariant and right-$\K$-invariant metric on $\mathfrak g$ which induces the round metric on the sphere $\G/\H$, and consider the \emph{scaled up} metric $L'(\cdot,\cdot)=L(\cdot,E\cdot)$ on $\mathfrak g$, where $E$ is given by \eqref{eq:E}. Since $L'$ converges to $L$ as $a\to\infty$, and $L$ induces a metric with strongly positive curvature on $\G/\H$ (which is an open condition), it follows that the constant $a$ can be chosen sufficiently large so that $L'$ also induces a metric with strongly positive curvature on $\G/\H$. Therefore, the metric on $\G\times_\K V$ induced by $L'+\dd t^2+f(t)^2\dd \theta^2$ has strongly nonnegative curvature, by Lemma \ref{lem:cheeger}. Finally, according to Lemma~\ref{lem:prescribing} (ii), the disk bundle $\G\times_\K V$ with this metric is isometric to a round cylinder near the boundary, concluding the proof.
\end{proof}

\begin{remark}\label{rem:totaro}
Some connected sums of two CROSS, such as $\C P^n\#\overline{\C P}^n$, $\Hr P^n\#\overline{\Hr P}^n$, and $\Ca P^2\#\overline{\Ca P}^2$ are diffeomorphic to biquotients, providing an alternative way of endowing them with metrics of strongly nonnegative curvature. Nevertheless, there are also some connected sums of CROSS, such as $\C P^8\#\Ca P^2$, $\Hr P^4\#\Ca P^2$, and $\Ca P^2\#\Ca P^2$, that are not even homotopy equivalent to biquotients \cite[Thm.\ 2.1]{totaro-biquotients}.
\end{remark}

\begin{remark}
By the proof of Theorem~\ref{thm:A}, it suffices that $\G/\H$ has strongly positive curvature and $\mathfrak p$ is irreducible, for the disk bundle $\G\times_{\K} V$ to have a metric with strongly nonnegative curvature which is product near the boundary. Thus, one is led to asking what manifolds can be obtained by gluing two such disk bundles along their common boundary. Combining the classifications of homogeneous spaces $\G/\H$ with strongly positive curvature~\cite{strongpos,moduli-flags} and homogeneous structures $\K/\H$ on spheres, it follows that the only possibilities are the above connected sums of two CROSS, besides doubles, homogeneous spaces, and biquotients. Thus, despite the relatively general framework provided above, this method cannot produce any new examples.
\end{remark}

\section{Open manifolds and their souls}\label{sec:guijarro}

The celebrated Soul Theorem of Cheeger and Gromoll~\cite{cheeger-gromoll} states that a complete open manifold $M$ with $\sec\geq0$ is diffeomorphic to the normal bundle $\nu S$ of a totally convex (hence totally geodesic) compact submanifold $S$ without boundary, called a \emph{soul} of $M$. Note that if $M$ has strongly nonnegative curvature, then its soul $S$ also has strongly nonnegative curvature~\cite[Prop.\ 2.6]{strongpos}.

Theorem \ref{thm:D} in the Introduction is a direct consequence of the following, which is the analogue of a result of Guijarro~\cite[Thm.\ A]{guijarro} for strongly nonnegative curvature.

\begin{theorem}\label{thm:guijarroplus}
Let $(M,\g)$ be a compact convex manifold with smooth boundary, strongly nonnegative curvature, and soul $S$. There exists a complete metric $\g'$ on the interior of $M$ with strongly nonnegative curvature, such that $S$ remains a soul, and $(M,\g')$ is isometric to the product $\nu_1(S)\times [0,+\infty)$ outside a compact neighborhood of $S$.
\end{theorem}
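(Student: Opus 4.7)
The plan is to retrace Guijarro's original construction \cite{guijarro} step by step while tracking the $4$-form modifications required for strongly nonnegative curvature. The strategy relies on the fact that strongly nonnegative curvature is preserved under Riemannian submersions, products, Cheeger deformations, and restriction to totally geodesic submanifolds, so each of Guijarro's metric deformations can be interpreted in one of these frameworks and the corresponding $4$-form correction follows from a Gray-O'Neill/Bianchi calculation.

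First, I would use the normal exponential map to identify $M$ with a disk subbundle of $\nu S$, together with Sharafutdinov's retraction $\rho\colon M\to S$. The soul $S$ inherits strongly nonnegative curvature by \cite[Prop.\ 2.6]{strongpos}, and a standard Cheeger-type construction on the principal $\O(k)$-bundle of normal frames produces a metric of strongly nonnegative curvature on the unit normal bundle $\nu_1(S)$; the cylinder $\nu_1(S)\times[1,+\infty)$ with the product metric then serves as the target asymptotic geometry.

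Next, I would interpolate between $\g$ on a small closed neighborhood of $S$ and the cylinder metric outside a larger compact neighborhood, following the ``scale up / scale down'' philosophy used in Section~\ref{sec:cheegermflds}. The key step is to apply a Lemma~\ref{lem:cheeger}-type construction in this broader setting: choose an odd smooth radial profile $f(t)$ with $f'(0)=1$, $f''(t)\leq 0$, and $f(t)\equiv a$ constant for $t\geq t_0$, and use it to warp the metric in the direction of the radial geodesics emanating from $S$. The Gray-O'Neill formula \eqref{eq:oneill} applied to the submersion from the warped product on $\nu S$ (with $S$ carrying a suitable ``scaled-up'' base metric) produces a $4$-form $\omega=\omega_S+3\mathfrak b(\alpha)-3\mathfrak b(\alpha_S)$ for which $R+\omega$ is positive-semidefinite, exactly as in the proof of Lemma~\ref{lem:cheeger}.

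The main obstacle is to make all of these pieces match up globally and smoothly: one must ensure that the metric produced on $\nu_1(S)$ by the Cheeger deformation agrees, after a smooth cutoff, with the hypersurface geometry of the ``distance spheres'' $\{d(\cdot,S)=r\}$ in $M$; and that the concavity/cigar estimates in the radial direction are compatible with the variation of the normal connection and the slice representation along $S$. Since Guijarro has already overcome the purely Riemannian version of these difficulties in \cite{guijarro}, the new work is to verify that every deformation step admits a $4$-form correction of Bianchi-image form, which is furnished by the submersion-preservation principle of \cite[\S 2.5]{strongpos} together with the explicit computation behind Lemma~\ref{lem:cheeger}. Combining these ingredients yields a metric $\g'$ with strongly nonnegative curvature, isometric to $\nu_1(S)\times[1,+\infty)$ outside a compact neighborhood of $S$, with $S$ still a soul because neither the metric on $S$ nor its total geodesicity are disturbed by the construction.
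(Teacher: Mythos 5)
Your proposal does not actually follow Guijarro's construction, and the route you substitute for it has a genuine gap. Guijarro does not build the improved metric by warping a bundle metric on $\nu S$ with a radial profile; his construction realizes $M$ as a \emph{convex hypersurface} $N\subset M\times\R$, namely the union of the graph of a convex function $\Gamma\colon D_{r_1}\to\R$ (vanishing on a smaller tube $D_{r_0}$ around $S$) with the cylinder $\partial D_{r_1}\times[1,+\infty)$, using Perelman's result that small tubes around the soul are convex and that the Sharafutdinov retraction is smooth there. The paper's proof then needs only two observations: the product $(M\times\R,\g+\dd t^2)$ has strongly nonnegative curvature, and a convex hypersurface of a manifold with strongly nonnegative curvature inherits that condition by the Gauss equation, since the correction term $\wedge^2 A$ (with $A$ the shape operator) is positive-semidefinite and the ambient $4$-form simply restricts to $N$. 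No interpolation, Cheeger deformation, or new Gray--O'Neill computation is required.

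The step in your argument that fails is the claim that ``a standard Cheeger-type construction on the principal $\O(k)$-bundle of normal frames produces a metric of strongly nonnegative curvature on $\nu_1(S)$,'' followed by a Lemma~\ref{lem:cheeger}-type radial interpolation between $\g$ near $S$ and that cylinder metric. For a general soul $S$ there is no homogeneous or cohomogeneity-one structure: Lemma~\ref{lem:cheeger} and the scale up/scale down machinery rely on the submersion $\G\times V\to\G\times_\K V$ and have no analogue for an arbitrary disk bundle. Producing a nonnegatively (let alone strongly nonnegatively) curved invariant metric on the frame bundle of an arbitrary normal bundle $\nu S$ is precisely the open ``converse soul'' problem discussed before Corollary~\ref{cor:C}, so it cannot be invoked as a standard construction; and even granting it, the interpolation between two unrelated metrics is exactly the hard part you leave unaddressed. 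Deferring the matching difficulties to \cite{guijarro} does not close the gap, because Guijarro never performs such an interpolation --- his method is designed to sidestep it via the convex hypersurface. To repair the proof, take the graph-plus-cylinder hypersurface $N\subset M\times\R$ and verify that convex hypersurfaces inherit strongly nonnegative curvature from the Gauss equation.
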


\begin{proof}
It follows from Perelman \cite{perelman-soul} that there exists $r_*>0$, smaller than the focal radius of the soul $S$, such that the tubular neighborhood $D_r$ of radius $r$ around $S$ is convex for all $0\leq r\leq r_*$ and the Sharafutdinov retraction $\mathrm{sh}\colon D_{r_*}\to S$ is $C^\infty$, see Guijarro~\cite[Lemma 2.2]{guijarro} and Guijarro and Walschap~\cite[Prop.\ 2.4]{guijarro-walschap}. Using this fact, Guijarro~\cite{guijarro} constructed a smooth convex hypersurface $N\subset D_{r_*}\times\R$, given by the union of the graph of a function $\Gamma\colon D_{r_1}\to\R$ that vanishes identically on $D_{r_0}$, and the cylinder $\partial D_{r_1}\times[1,+\infty)$, for some $0<r_0<r_1<r_*$. In particular, $N$ is diffeomorphic to the normal bundle $\nu S$, and hence to $M$. Since $(M\times\R,\g+\dd t^2)$ has strongly nonnegative curvature and $N\subset M\times\R$ is convex, the induced metric on $N$ also has strongly nonnegative curvature by the Gauss equation. The desired metric $\g'$ is obtained pulling back this induced metric by the diffeomorphism $N\cong M$.
\end{proof}

A consequence of Theorem~\ref{thm:guijarroplus} is the existence of a metric with strongly nonnegative curvature on the double of the normal disk bundle to the soul $S$ of any open manifold with strongly nonnegative curvature, cf.\ Guijarro~\cite[Thm.~1.2]{guijarro}.

\end{document}